\newtheorem{theorem}{Theorem}[section]
\newtheorem{thm}[theorem]{Theorem}
\newtheorem{prop}[theorem]{Proposition}
\newtheorem{claim}[theorem]{Claim}
\newtheorem{fact}[theorem]{Fact}
\newtheorem{cor}[theorem]{Corollary}
\newtheorem{lemma}[theorem]{Lemma}
\newtheorem{question}[theorem]{Question}
\theoremstyle{definition}
\newtheorem{defn}[theorem]{Definition}
\theoremstyle{remark}
\newtheorem{remark}[theorem]{Remark}
\DeclareMathOperator{\cl}{cl}
 \DeclareMathOperator{\intr}{int}
\newcommand{\la}{\langle}
\newcommand{\ra}{\rangle}
\newcommand{\CM}{{\cal M}}
\newcommand{\CN}{{\cal N}}
\newcommand{\sub}{\subseteq}
\newcommand{\es}{\ensuremath{\emptyset}}
\newcommand{\bbm}[1]{\ensuremath{\mathbf{#1}}}
\newcommand{\mbf}[1]{\ensuremath{\mathbf{#1}}}
\newcommand{\cal}[1]{\ensuremath{\mathcal{#1}}}
\newcommand{\Lrarr}{\ensuremath{\Leftrightarrow}}
\newcommand{\Rarr}{\ensuremath{\Rightarrow}}
\newcommand{\rarr}{\ensuremath{\rightarrow}}
\newcommand{\res}{\ensuremath{\upharpoonright}}
\newcommand{\ve}{\ensuremath{\varepsilon}}
\newcommand{\sm}{\setminus}
\newcommand{\N}{\mathbb{N}}
\def\cl{\mathrm{cl}}
\def\CM{\mathcal M}
\def\CN{\mathcal N}
\title[Groups definable  in
weakly o-minimal non-valuational structures]{Groups definable in
weakly o-minimal non-valuational structures}
\subjclass[2010]{Primary 03C60, 03C64,  Secondary 06F20}
\keywords{Weakly o-minimal structures, definable groups, Pillay's conjecture}
\date{\today}
\begin{document}

\author {Pantelis  E. Eleftheriou}
\address{Department of Mathematics and Statistics, University of Konstanz, Box 216, 78457 Konstanz, Germany}
\email{panteleimon.eleftheriou@uni-konstanz.de}



\thanks{Research supported by a Research Grant from the German Research Foundation (DFG) and a Zukunftskolleg Research Fellowship.}

\begin{abstract}

Let $\cal M$ be a weakly o-minimal non-valuational structure, and $\cal N$ its canonical o-minimal extension (by Wencel). We prove that every group $G$ definable in \cal M is a subgroup of a group $K$ definable in \cal N, which is canonical in the sense that it is the smallest such group. As an application, we obtain that $G^{00}= G\cap K^{00}$, and establish Pillay's Conjecture in this setting: $G/G^{00}$, equipped with the logic topology, is a compact Lie group, and if $G$ has finitely satisfiable generics, then $\dim_{Lie}(G/G^{00})= \dim(G)$.



\end{abstract}

\maketitle

\section{Introduction}

Definable groups have been at the core of model theory for at least a period of three decades, largely because of their prominent role in important applications of the subject, such as Hrushovski's proof of the function field Mordell-Lang conjecture in all characteristics (\cite{hr}). Examples include  algebraic groups (which are definable in algebraically closed fields) and  real Lie groups (which are definable in o-minimal structures). Groups definable in o-minimal structures are well-understood. The starting point was Pillay's  theorem in [40] that every such group admits a definable manifold topology that makes it into a topological group, and the most influential work in the area has arguably been the solution of Pillay's Conjecture over a field (\cite{hpp}), which brought to light new tools in  theories with NIP (not the independence property). While substantial work on NIP groups has since been done (for example,  in \cite{cs}), a full description of definable groups is still missing in many broad NIP settings. In this paper, we provide this description for groups definable in weakly o-minimal non-valuational structures, and as an application we establish Pillay's  Conjecture in this setting.



We recall that a structure  $\CM=\la M, <, \dots\ra$ is  \emph{weakly o-minimal} if $<$ is a dense linear order and every definable subset of $M$ is a finite union of convex sets. Weakly o-minimal structures were introduced by Cherlin-Dickmann \cite{cd} in order to study the model theoretic properties of real closed rings. They were later also used in Wilkie's proof of the o-minimality of real exponential field \cite{WilkiePfaff}, as well as in van den Dries' study of Hausdorff limits \cite{vdDriesLimits}. Macpherson-Marker-Steinhorn \cite{MacMaSt}, followed-up by Wencel \cite{wen-nonv, wen-scd}, began a systematic study of weakly o-minimal groups and fields, revealing many similarities with the o-minimal setting.

An important dichotomy between valuational structures -- those admitting a definable proper non-trivial convex subgroup --  and non-valuational ones arose, supported by good evidence that the latter structures resemble o-minimal structures more closely than what the former ones do. For example, strong monotonicity and strong cell decomposition theorems were proved for non-valuational structures in \cite{wen-nonv}. In the same reference,  the \emph{canonical o-minimal extension}  $\cal N$ of $\CM$ was introduced, which is an o-minimal structure whose domain is the Dedekind completion $N$ of $M$, and whose induced structure on $M$ is precisely $\CM$. A further description of $\cal N$ was recently provided in \cite{bhp}.
As far as definable groups are concerned, the existence of a definable group manifold topology was proved in \cite{wen-gps}, extending the aforementioned theorem by Pillay \cite{pi-gps} for o-minimal structures, whereas in the special case when $\cal M$ is the trace of a real closed field on a dense elementary subfield, Baro and Martin-Pizarro \cite{bm} showed that every definable group admits, locally, an algebraic group law.

The main result of this paper is that every group $G$ definable in \cal M is a dense subgroup of a group $K$ definable in $\CN$, which is canonical in the sense that it is the smallest such group (Theorems \ref{main}). The existence of group topology from \cite{wen-gps} and the local algebraic result from \cite{bm} are  straightforward consequences.
Note however that the current setting is much richer than that of o-minimal traces (by \cite{ehk}), and that the result herein is global. Moreover, a far reaching application is obtained: we give a sharp description of the smallest type-definable subgroup $G^{00}$ of $G$ of bounded index, and establish Pillay's Conjecture in this setting (Theorem \ref{app}). 
Finally, all our results apply yet to another category of definable groups, namely \emph{small groups} in certain dense pairs $\la \cal N, P\ra$ (see  Remark \ref{rmk-small} below).

For the rest of this paper, $\cal M$ denotes a weakly o-minimal non-valuational structure expanding an ordered group, and $\cal N$ its canonical o-minimal extension by Wencel \cite{wen-nonv} (some details are given in Section \ref{sec-wom}).
Unless stated otherwise, by `definable' we mean `definable in \cal M with parameters', and by `\cal N-definable', we mean `definable in \cal N with parameters'.
If $X\sub N^n$, $\cl(X)$ denotes the closure of $X$ in the usual order topology. If $K$ is a group definable in $\cal N$, and $X\sub K$, we denote by $\cl_K(X)$ the closure of $X$ in the group topology (given by \cite{pi-gps}).
For every set $X\sub N^n$, we define the \emph{dimension of $X$} as the maximum $k$ such that some projection onto $k$ coordinates contains an open subset of $M^k$ (where $M^0=\{0\}$). We define $\dim(\emptyset)=-\infty$. This definition of dimension is natural in our setting, since, if $X\sub M^n$ is definable, then $\dim X$ equals the usual dimension in the ordered structure \cal M, and if $X\sub N^n$ is $\CN$-definable, then $\dim X$ equals the usual o-minimal dimension (easily, by \cite{wen-nonv} -- see also Section \ref{sec-wom}).



\begin{theorem}\label{main} Let $\cal M$ be a weakly o-minimal non-valuational expansion of an ordered group, and $G$ a definable group. Let $\cal N$ be the canonical o-minimal extension of $\cal M$. Then there is an $\cal N$-definable  group $K$ that contains $G$ as a subgroup, and such that for every other $\cal N$-definable group $H$ that contains $G$ as a subgroup, $K$ $\cal N$-definably embeds in $H$. Moreover, $\cl_K(G)=K$ and $\dim G=\dim K$.
\end{theorem}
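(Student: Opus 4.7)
The plan is to realize $K$ as the $\cal N$-definable manifold obtained from $G$ by extending each chart of a strong cell atlas to the canonical o-minimal extension. By Wencel's theorem, $G$ admits a definable group manifold topology: fix a finite definable atlas $(U_i,\phi_i)_{i\leqs N}$ of $G$ whose coordinate images $V_i=\phi_i(U_i)\sub M^k$ (with $k=\dim G$) are open \emph{strong cells} in the sense of \cite{wen-nonv}, and write $\tau_{ij}$, $m_{ij}$, $\iota_i$ for the definable continuous transition, multiplication, and inversion maps read in these local coordinates.

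The heart of the construction is then to apply the canonical extension mechanism. Each strong cell $V_i$ extends uniquely to an $\cal N$-definable open cell $\wh V_i\sub N^k$ containing $V_i$ densely, and each definable continuous function between strong cells extends uniquely to an $\cal N$-definable continuous function between the corresponding extensions; in particular we obtain $\cal N$-definable continuous $\wh\tau_{ij}$, $\wh m_{ij}$, $\wh\iota_i$. Gluing the $\wh V_i$ along the $\wh\tau_{ij}$ produces an $\cal N$-definable manifold $K$ equipped with $\cal N$-definable multiplication and inversion built from the $\wh m_{ij}$ and $\wh\iota_i$. The group axioms define $\cal N$-definable \emph{closed} subsets of $K^3$ and $K^2$ which already contain $G^3$ and $G^2$; since $G$ is dense in $K$ by construction, these closed sets must equal $K^3$ and $K^2$, so $K$ is an $\cal N$-definable group. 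The identity of $G$ is the identity of $K$, the equality $\cl_K(G)=K$ is built in, and $\dim K=k=\dim G$ since $K$ is covered by finitely many charts onto open subsets of $N^k$.

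For canonicity, given another $\cal N$-definable group $H$ with $G\sub H$, first pass to $\cl_H(G)$, an $\cal N$-definable closed subgroup of $H$, and thus reduce to the case where $G$ is dense in $H$. Choose an $\cal N$-definable atlas for $H$. On each chart overlap with the image of $G$, the inclusion $G\hrarr H$ read in local coordinates is a definable continuous function between strong cells in $\cal M$; by the uniqueness clause in the canonical extension, it lifts to a unique $\cal N$-definable continuous function on the corresponding extended cells. These local lifts are forced to agree on overlaps by uniqueness, so they glue to an $\cal N$-definable continuous homomorphism $f:K\to H$ extending the inclusion. Density of $G$ in $K$ together with injectivity of $f\res G$ forces $f$ to be injective, hence an $\cal N$-definable embedding.

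The main obstacle I expect is the canonicity step: namely, ensuring that the extension of $G\hrarr H$ to a well-defined $\cal N$-definable group embedding $K\to H$ really does exist globally. This hinges on the \emph{uniqueness} of canonical extensions of definable continuous functions in the non-valuational setting (and on the fact that $H$'s manifold topology is compatible, via strong cells, with the extended topology on $K$); any failure of uniqueness on chart overlaps would obstruct the gluing. A secondary technical point is checking that the initial atlas for $G$ can indeed be chosen with strong cell charts and with multiplication/inversion locally represented by maps that fall within the scope of Wencel's canonical extension — this should follow by standard strong cell decomposition arguments applied to the graphs of multiplication and inversion.
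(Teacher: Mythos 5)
Your approach --- gluing extended charts of $G$'s group manifold --- is genuinely different from the paper's, which first proves an abstract, purely dimension-theoretic group chunk theorem (Theorem~\ref{thm-gpchunk}) in the style of van den Dries's topological version of Weil's theorem, and then applies it to a chunk $(X,i_{\res X},F_{\res Z})$ constructed in Proposition~\ref{findchunk}. That route is designed precisely to avoid the issue your argument runs into.

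That issue is your key claim that a continuous definable map between strong cells extends to a \emph{continuous} $\cal N$-definable map between the extended cells. This is stronger than what the canonical extension actually provides: Fact~\ref{fact-wen}(2) only yields \emph{some} $\cal N$-definable extension, with no continuity clause, and in general no continuous extension exists. For example, when $\cal M$ is the trace of a real closed field $\cal N$ on a dense real closed subfield, a unary definable function given by two affine pieces split at a cut $\alpha\in N\setminus M$ is continuous on its domain in $M$ (the cut lies outside $M$), yet every $\cal N$-definable extension to $N$ has a jump at $\alpha$. Once extensions may be discontinuous, the density arguments you rely on break down: the group-axiom loci, the cocycle condition for the $\wh\tau_{ij}$, and the consistency of the $\wh m_{ij}$ on chart overlaps are no longer closed sets, so agreement on $G$ no longer forces agreement on $K$. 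Your own worry about \emph{uniqueness} of extensions is therefore aimed one step too far downstream; uniqueness of a continuous extension to a dense subset is automatic, and the genuine obstacle is \emph{existence}. The paper sidesteps all of this by accepting operations that are defined and associative only generically (on large, i.e.\ full-dimensional, subsets), and then manufactures the global group as a definable set of equivalence classes for the relation ``$F_a\circ F_b$ agrees with $F_c\circ F_d$ on a large set,'' with canonicity handled by parts (ii) and (iii) of Theorem~\ref{thm-gpchunk} rather than by extending the inclusion chart-by-chart. A smaller secondary gap: in your canonicity step you pass to $\cl_H(G)$, but since $G$ is only $\cal M$-definable it is not immediate that the closure of $G$ in the \emph{group topology} of $H$ is $\cal N$-definable; Fact~\ref{fact-wen}(1) controls only the order-topology closure, and Remark~\ref{rem-sym} relates the two closures only up to a lower-dimensional discrepancy.
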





Pillay's Conjecture was stated in \cite{pi-conj} for groups definable in o-minimal structures, and was established, by splitting different cases and building on previous work, in \cite{emprt, elst, hpp, pet-sbd}. A version of it in the $p$-adic setting was further proved in \cite{op}. Before stating Pillay's Conjecture, let us fix some terminology. Let $\cal R$ be any sufficiently saturated structure. By a \emph{small} set or a set of \emph{bounded cardinality} we mean a set of cardinality smaller than $|R|$. By a type-definable set we mean an intersection of a bounded collection of definable sets. Let $G$ be a group definable in \cal R,  $H$ a normal type-definable subgroup of $G$, and $\pi:G\to G/H$ the canonical group homomorphism. We say that $A\sub G/H$ is closed in the \emph{logic topology} if and only if $\pi^{-1}(A)$ is type-definable.

$ $\\
\noindent\textbf{Pillay's Conjecture.} \emph{Let $G$ be a definably compact group, definable in a sufficiently saturated o-minimal structure. Then:
\begin{enumerate}
  \item $G$ has a smallest type-definable subgroup $G^{00}$ of bounded index.
  \item $G/G^{00}$, equipped with the logic topology, is a compact real Lie group.
  \item If $G$ is definable compact, then $\dim_{Lie} G/G^{00}=\dim G$.
\end{enumerate}}\vskip.2cm
\noindent Notes: (a) $G^{00}$ is necessarily normal, (b) see \cite{petst} for the notion of definable compactness.


If $\cal R$ is a sufficiently saturated NIP structure, it is also known that every definable group $G$ contains a smallest type-definable subgroup $G^{00}$ of bounded index (Shelah \cite{shelah}), and the notion of definable compactness was generalized to the notion of having \emph{finitely satisfiable generics} ($fsg$) in \cite{hpp} (see Definition \ref{def-fsg} below for an alternative definition). As an application of Theorem \ref{main}, we extend Pillay's Conjecture to the weakly o-minimal setting. To our knowledge, this is the first time that the conjecture is being considered in structures that properly expand an o-minimal one.
Recall that \cal M is our fixed weakly o-minimal non-valuational expansion of an ordered group. It is known that $\cal M$ is NIP (\cite[A.1.3]{simon-book}). As remarked in Section \ref{sec-app} below, if \cal M is sufficiently saturated, then so is \cal N in an appropriate signature. 

\begin{theorem}\label{app} Assume $\cal M$ is sufficiently saturated.
Let $G$ be a definable group, and $K$ as in Theorem \ref{main}. Then
 \begin{enumerate}
 \item $G^{00}=G\cap K^{00}$.
 \item $G/G^{00}\cong K/K^{00}$, and hence is a compact real Lie group.
 \item If $G$ has $fsg$, then so does $K$, and hence   $\dim_{Lie} G/G^{00}=\dim G$.
 \end{enumerate}
\end{theorem}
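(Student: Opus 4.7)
\textbf{Proof plan for Theorem~\ref{app}.}

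The argument has three parts, each building on Theorem~\ref{main}.

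\emph{Part (1).} The strategy is to establish the closure identity $\cl_K(G^{00}) = K^{00}$. First, I would observe that $G^{00}$ and $K^{00}$ are closed in their respective group topologies: by a standard topological-group argument, $\cl_G(G^{00}) = \bigcap_U U \cdot G^{00}$ (over a small basis of definable open neighborhoods $U$ of the identity) is type-definable of bounded index, hence equals $G^{00}$ by minimality of $G^{00}$, and similarly for $K^{00}$. Next, since $\CN$'s induced structure on $M$ is $\CM$, the set $G \cap K^{00}$ is $\CM$-type-definable and has bounded index in $G$, so $G^{00} \subseteq G \cap K^{00}$. For the reverse inclusion I would show $\cl_K(G^{00}) = K^{00}$: writing $G^{00} = \bigcap_i X_i$ with the $X_i$ $\CM$-definable and closed under finite intersection, saturation gives $\cl_K(G^{00}) = \bigcap_i \cl_K(X_i)$, which is $\CN$-type-definable via Wencel's construction of $\CN$. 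This set has bounded index in $K$, since $\cl_K(gG^{00}) = g\,\cl_K(G^{00})$ yields boundedly many cosets whose union covers $\cl_K(G) = K$. Minimality of $K^{00}$ then gives $K^{00} \subseteq \cl_K(G^{00})$, and the reverse containment follows from $G^{00} \subseteq K^{00}$ together with closedness of $K^{00}$. Hence $\cl_K(G^{00}) = K^{00}$, and
\[
G \cap K^{00} = G \cap \cl_K(G^{00}) = \cl_G(G^{00}) = G^{00}.
\]

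\emph{Part (2).} By (1), the natural map $\psi : G/G^{00} \to K/K^{00}$ induced by the inclusion $G \hookrightarrow K$ is injective. I would show $\psi$ is a continuous surjection between compact Hausdorff groups, hence a topological isomorphism. Continuity is routine via induced structure: given a logic-closed $C \subseteq K/K^{00}$, its preimage in $K$ is $\CN$-type-definable, so intersecting with $G$ yields an $\CM$-type-definable $G^{00}$-invariant set, making $\psi^{-1}(C)$ logic-closed in $G/G^{00}$. Since $G/G^{00}$ is compact, $\psi(G/G^{00})$ is closed; it remains to show it is dense. Here I would use a monad-type argument: any $\CN$-type-definable $K^{00}$-invariant subset of $K$ that contains $G$ must, by density of $G$ in $K$ (Theorem~\ref{main}) together with the closure identity $\cl_K(G^{00}) = K^{00}$ from (1), equal all of $K$; hence any logic-closed set containing $\psi(G/G^{00})$ is all of $K/K^{00}$. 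Pillay's conjecture in $\CN$ then equips $K/K^{00} \cong G/G^{00}$ with a compact real Lie group structure.

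\emph{Part (3).} The plan is: transfer fsg from $G$ to $K$, apply o-minimal Pillay's conjecture to $K$, and combine with part (2) and $\dim G = \dim K$ from Theorem~\ref{main}. Given a global fsg generic $p$ on $G$ witnessed by finite satisfiability in a small $M_0 \prec \CM$, I would extend $p$ to a global type $p^*$ on $K$ whose finite satisfiability is witnessed by a corresponding small $N_0 \prec \CN$ (e.g., the canonical $\CN$-extension of $M_0$). The induced structure on $M$ makes the extension and the preservation of finite satisfiability manageable. Once $K$ has fsg, Pillay's conjecture in $\CN$ gives $\dim_{Lie}(K/K^{00}) = \dim K$, and combining with part (2) and Theorem~\ref{main} yields $\dim_{Lie}(G/G^{00}) = \dim G$.

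\emph{Main obstacle.} The hardest step is the fsg transfer in (3): extending an fsg generic from $G$ to $K$ requires careful handling of types with parameters in $N \setminus M$ and precise tracking of the finite-satisfiability witness under Dedekind extension. The surjectivity step in (2) is also subtle, since cosets $xK^{00}$ lack order-topology interior, so density of $G$ alone does not suffice; the closure identity from (1) together with the $K^{00}$-invariance of the relevant type-definable sets is what makes the density argument go through.
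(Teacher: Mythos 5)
Your Parts (1) and (2) follow the same essential strategy as the paper, with a mildly cleaner packaging: the paper establishes $G^{00}=G\cap K^{00}$ by taking an arbitrary type-definable $H\le G$ of bounded index, passing to $L=\cl_K(H)$, and showing $H=G\cap L$ via a dimension/coset argument, whereas you specialize to $H=G^{00}$, prove the closure identity $\cl_K(G^{00})=K^{00}$, and invoke the subspace-closure identity $G\cap\cl_K(G^{00})=\cl_G(G^{00})=G^{00}$. Both turn on the same facts (Fact~\ref{bddindex}, Remark~\ref{rem-sym}, openness of $G^{00}$ and $K^{00}$, density of $G$ in $K$). For (2), the paper shows $GK^{00}=K$ directly and applies the second isomorphism theorem, then checks continuity; your compact-Hausdorff closed-image-plus-density route to surjectivity is correct and essentially equivalent.

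Part (3) is where your plan has a genuine gap, and it is precisely the gap that forces the paper into its measure-theoretic approach. You propose to take a global type $p$ on $G$ witnessing $fsg$ (so that every translate $gp$ with $g\in G$ is finitely satisfiable in a small $M_0$), extend it to $p^*$ on $K$ by $X\in p^*\Leftrightarrow X\cap G\in p$, and argue that $p^*$ witnesses $fsg$ for $K$ over some small $N_0\prec\CN$. This works for translates $gp^*$ with $g\in G$, since then $(g^{-1}X)\cap G=g^{-1}(X\cap G)$ and finite satisfiability in $M_0\subseteq N_0$ is inherited. But $fsg$ for $K$ requires \emph{all} translates $gp^*$, $g\in K$, to be finitely satisfiable in the \emph{fixed} small $N_0$, and for $g\in K\setminus G$ there is no way to factor $g$ out of $(g^{-1}X)\cap G$; a realization found via $p$ lands in $gM_0$, which is not contained in $N_0$. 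Nothing in "induced structure" or "tracking the witness under Dedekind extension" fixes this, because the obstruction is not about how definable sets of $\CN$ trace on $M$ but about where the translated type's realizations live. The paper avoids this by working with a left-invariant $fim$ Keisler measure $\mu$ on $G$ (Definition~\ref{def-fsg}), setting $\nu(X)=\mu(X\cap G)$, and proving left-invariance of $\nu$ (Claim~2 of Lemma~\ref{fsg}) via an $\varepsilon$-approximation argument: the $fim$ condition produces a finite sample $a_1,\dots,a_r\in G$ estimating $\mu$ on a whole definable family, and continuity of the group operation plus density of $G$ in $K$ lets one replace an arbitrary $k\in K$ by a nearby $g\in G$ without disturbing which $a_i$'s fall in the translated set. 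That $\varepsilon$-slack is exactly what a bare ultrafilter of definable sets (a type) does not give you; so the type-extension route would need a substantively new idea to close the gap for translates by $K\setminus G$.
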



\noindent\textbf{Outline of the strategy.} Let $G$ be a definable group. To show Theorem \ref{main}, we first prove in Section \ref{sec-gpchunk} a group chunk theorem for o-minimal structures, recasting \cite{vdd-weil} with regard to dimension properties instead of topological ones.
In Section \ref{sec-wom}, we establish the existence of an o-minimal group chunk in which $G$ is dense, from which Theorem \ref{main} follows. In Section \ref{sec-app}, we prove Theorem \ref{app}.

\vskip.2cm\noindent\textbf{Terminology/notation.} Let \cal R be a  structure, such as our fixed \cal M or \cal N. Given a set $Z\subseteq R^{2n}$ and $x\in R^n$, we denote
$$Z_x=\{ b \in R^n \ : \ (x, b) \in Z\}$$
and
$$Z^x=\{ a \in R^n \ : \ (a, x) \in Z\}.$$
By $\pi_1, \pi_2: R^{2n}\to R^n$ we denote the projections onto the first and second set of $n$ coordinates, respectively. 
An \emph{embedding} $\sigma :G\to H$ between two groups is simply an injective group homomorphism.



\vskip.2cm\noindent\textbf{Acknowledgments.} I wish to thank E. Baro, A. Hasson and Y. Peterzil for several discussions on the topics of this paper.  I am especially thankful to A. Hasson for pointing out various corrections on earlier drafts, and for suggesting the current proof of Lemma \ref{fsg}.

\section{A group chunk theorem}\label{sec-gpchunk}

The goal of this section is to show a group chunk theorem in the spirit of \cite{vdd-weil}, but with respect to dimension properties instead of topological ones. It is possible that our Theorem \ref{thm-gpchunk} below for \cal N reduces to that reference, however, such a reduction appears to be non-trivial and hence we present a full proof. Moreover, the present account goes through in an abstract dimension-theoretic setting, which we fix next.

For the rest of this section, $\cal R$ denotes a structure that eliminates imaginaries, and `definable' means `definable in \cal R with parameters'. We assume that there is a map $\dim$ from the class  of all definable sets  to $\{-\infty\}\cup \N$ that satisfies the following properties: for all definable $X,Y\sub R^n$,  and $a\in R$,
\begin{enumerate}
\item[\textbf{(D1)}]  $\dim\{a\}=0$, $\dim R=1$, and $\dim X=-\infty$ if and only if $X=\es$

\item[\textbf{(D2)}]  $\dim (X\cup Y)=\max \{\dim X, \dim Y\}$
\item[\textbf{(D3)}]  if $\{X_t\}_{t\in I}$ is a definable family of sets, then
\begin{enumerate}
\item for $d\in \{-\infty\}\cup \N$, the set $I_d=\{t\in I: \dim X_t=d\}$ is definable, and
\item  if every $X_t$ has dimension $k$, and the family is disjoint, then
$$\dim \bigcup_{t\in I} X_t=\dim I + k$$
\end{enumerate}
\item[\textbf{(D4)}]  if $f:X\to Y$ is a definable bijection, then $\dim X=\dim Y$.
\end{enumerate}
For example, \cal N with the usual o-minimal dimension satisfies the above properties (\cite{vdd-book}). Below we will be using the above properties without specific mentioning. Given definable sets $V\sub X\sub R^n$, we call $V$ \emph{large} in $X$ if $\dim(X\sm V)<\dim X$.

\begin{defn} A \emph{definable group chunk} is a triple $(X, i, F)$, where $X\sub R^n$ is a definable set, and $i:X\to R^n$, $F:Z\sub X^2\to R^n$ are definable maps,  such that
\begin{enumerate}
\item 
$i$ is injective on a large subset of $X$.

  \item for every $x\in X$, $F(x,-):Z_x\to F(x,Z_x)$ and  $F(-, x):Z^x\to F(Z^x,x)$ are bijections between large subsets of $X$,

 \item  for every $(x, y)\in Z$, there is a large $S_{(x,y)}\sub X$, such that for every $z\in S_{(x,y)}$, the following expressions are defined and are equal:
\begin{align}
  & (a)\,\,\,\, F(F(x,y), z)=F(x, F(y,z))\notag \\
  &(b)\,\,\,\, F(x, F(i(x), z))=z=F(F(z,x), i(x)),\notag
\end{align}
 \item  for every $x\in X$, $\pi_1 F^{-1}(x)$ and $\pi_2 F^{-1}(x)$ are large in $X$.
\end{enumerate}
\end{defn}

We often refer to properties (1)-(4) above as ``Axioms".

\begin{thm}\label{thm-gpchunk} Let $(X, i, F)$ be a definable group chunk.
Then:
\begin{enumerate}
  \item[(i)] there is a definable group $K=\la K, *, \bbm 1_K\ra$ with $X\sub K$ large in $K$, and such that for every $(x,y)\in Z$,
$$F(x,y)=x * y.$$
\item[(ii)] if $H=\la H, \oplus, \bbm1_H\ra$ is a definable group and $\sigma:X\to H$  an $\cal L$-definable injective map such that for every $(x,y)\in Z$,
    $$\sigma F(x,y)=\sigma(x) \oplus \sigma(y),$$ then $K$ definably embeds in $H$.
    \item[(iii)] if, moreover, $\sigma(X)$ is large in $H$, then $H$ and $K$ are definably isomorphic.
\end{enumerate}
\end{thm}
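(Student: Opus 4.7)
The plan is to realize $K$ via a Weil--Hrushovski-style construction, where the axioms (D1)--(D4) stand in for topological genericity through the notion of ``largeness''. Informally, elements of $K$ will be formal products $x\cdot y$ with $(x,y)\in X\times X$, with two such pairs identified when they induce the same left-translation on generic $z\in X$. Concretely, set $(x_1,y_1)\sim(x_2,y_2)$ iff there is a large $W\subseteq X$ on which both $F(x_1,F(y_1,z))$ and $F(x_2,F(y_2,z))$ are defined and agree. By (D3)(a) the predicate ``a definable subset of $X$ is large'' is itself definable, so $\sim$ is a definable relation; transitivity is the only nontrivial clause and follows because the intersection of two large subsets of $X$ has complement of strictly smaller dimension, by (D2). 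Applying elimination of imaginaries, set $K:=X^2/{\sim}$.

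Next I would endow $K$ with a group structure. For each $g=[x,y]\in K$ the partial map $\lambda_g:z\mapsto F(x,F(y,z))$ is a definable bijection between large subsets of $X$ (by Axiom 2), and two classes coincide exactly when their $\lambda$'s agree on a large set. Given $g_1,g_2\in K$, I would use Axiom 4 to choose representatives $(x_i',y_i')$ with $(y_1',x_2')\in Z$ and $(F(y_1',x_2'),y_2')\in Z$; iterated reassociation via Axiom 3(a) then rewrites $\lambda_{g_1}\circ\lambda_{g_2}$ on a large set as $\lambda_{[x_1',F(F(y_1',x_2'),y_2')]}$, and we declare this class to be $g_1*g_2$ (independence from representatives is routine via largeness). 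The identity is $[x,i(x)]$ for any $x$ in the large injectivity domain of $i$ (Axiom 1), all such classes agreeing by Axiom 3(b); after passing to a representative lying in that domain, the inverse of $[x,y]$ is $[i(y),i(x)]$. Associativity of $*$ and the identity/inverse laws are then direct verifications on large subsets via Axioms 1--3. The embedding $X\hookrightarrow K$ is $x\mapsto[F(x,a),i(a)]$ for generic $a\in Z^x$ (independence of $a$ and compatibility with $F$ follow from Axiom 3), and $\dim K=\dim X$ via (D3)(b) applied to the surjection $X^2\to K$, whose fibers are $\dim X$-dimensional (given generic $x'$, there is a unique $y'$ with $[x',y']=[x,y]$), so $X$ is large in $K$.

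For (ii), define $\hat\sigma:K\to H$ by $[x,y]\mapsto\sigma(x)\oplus\sigma(y)$. Applying $\sigma$ to the defining identity of $\sim$ and unfolding via $\sigma\circ F=\oplus\circ(\sigma\times\sigma)$ on $Z$ yields $\sigma(x_1)\oplus\sigma(y_1)\oplus\sigma(z)=\sigma(x_2)\oplus\sigma(y_2)\oplus\sigma(z)$ on a large set, so well-definedness of $\hat\sigma$ follows by cancellation in $H$; the reverse direction (using injectivity of $\sigma$) shows $\hat\sigma$ is injective, and it is a homomorphism by the same unfolding. For (iii), assuming $\sigma(X)$ is large in $H$, the set $\sigma(X)\cap\{a\in H:a^{-1}\oplus h\in\sigma(X)\}$ is the intersection of two large subsets of $H$ (the second large by (D4) applied to the definable bijection $a\mapsto a^{-1}\oplus h$ of $H$), hence large by (D2) and in particular nonempty; picking $a$ in it and $b:=a^{-1}\oplus h\in\sigma(X)$ gives $h=a\oplus b=\hat\sigma([x,y])$, so $\hat\sigma$ is surjective and hence an isomorphism. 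The main obstacle running through the whole proof is that $F$ is only partially defined, so every manipulation with the $\lambda_g$'s must be certified by checking that the auxiliary subset on which it holds is large; this is handled systematically using (D2), (D3), and the largeness built into Axioms 1--4.
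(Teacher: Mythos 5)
Your construction is essentially the paper's: you define $(x_1,y_1)\sim(x_2,y_2)$ by agreement of the germs $z\mapsto F(x_1,F(y_1,z))$ and $z\mapsto F(x_2,F(y_2,z))$ on a large set, form $K$ by elimination of imaginaries, define the product by passing to suitable representatives, and verify (ii), (iii) by unfolding $\sigma F=\oplus\circ(\sigma\times\sigma)$ on $Z$ and cancelling a generic $\sigma(z)$; your direct map $[x,y]\mapsto\sigma(x)\oplus\sigma(y)$ in (ii)--(iii) is correct and even avoids the paper's detour through writing every element of $K$ as a product of two elements of $X$. However, part (i) has a genuine gap at the claim that $X$ is large in $K$. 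You derive $\dim K=\dim X$ via (D3)(b) and then conclude ``so $X$ is large in $K$''; that inference is invalid: a definable subset of full dimension need not be large, since its complement can have the same dimension, so $\dim K=\dim X$ gives no bound on $\dim(K\setminus h(X))$, where $h:X\to K$ is your embedding. What is needed is the paper's Claim 5 argument, which is a family argument, not a dimension comparison: if there were a definable $S\subseteq K\setminus h(X)$ with $\dim S=k=\dim X$, then, since the $\sim$-classes are disjoint and each has dimension $k$, the union of the classes coded in $S$ would have dimension $2k$ by (D3)(b); as $Z$ is large in $X^2$, it must meet that union, but every $(c,d)\in Z$ satisfies $[c,d]=h(F(c,d))\in h(X)$, a contradiction. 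Note this cannot be run classwise: a single class (dimension $k$) may well avoid $Z$, whose complement can have dimension $2k-1\geq k$, so the $k$-dimensional family of classes is essential.

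A related, smaller defect is that you repeatedly use the fact that each $\sim$-class projects largely onto each coordinate: when choosing representatives with $(y_1',x_2')\in Z$ and $(F(y_1',x_2'),y_2')\in Z$, and in the parenthetical ``given generic $x'$, there is a unique $y'$ with $[x',y']=[x,y]$'' on which your fiber-dimension computation (and hence $\dim K=\dim X$) rests. This is not a direct consequence of Axiom (4) alone; it is the content of the paper's Claim 2, which needs a genuine argument combining Axioms (2), (3)(a) and (4): put $x=F(a,F(b,z))$, intersect $F(Z_z,z)$ with $\pi_2F^{-1}(x)$ to get a large set of $e$'s, and from each $e$ produce $c,d$ with $F(c,F(d,z))=F(a,F(b,z))$ and then $(c,d)\sim(a,b)$ via associativity on a large set. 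Once you prove this claim and replace the dimension-equality step by the family argument above, the rest of your outline (identity $[x,i(x)]$, inverse $[i(y),i(x)]$, associativity checked on large sets, well-definedness and injectivity of $\hat\sigma$, and surjectivity in (iii) by intersecting two large subsets of $H$) goes through and matches the paper in substance.
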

\begin{proof}   Note that by Axiom (3), for every $a,b\in X$, the set of elements $z\in X$ for which $F(a, F(b,z))$ is defined is large in $X$. 
We will be using this fact without mentioning. Also, we will sometimes write $ab$ for $F(a,b)$ to lighten the notation. Finally, let $k=\dim X$.\\

\noindent  (i) It is enough to find a definable group $K=\la K, *, \bbm{1}_K\ra$ and a   definable injective map $h: X\to K$, with $h(X)$ large in $K$, and such that for every $(x,y)\in Z$,
$$h(F(x,y))=h(x) * h(y).$$
Indeed, then, consider the bijection that maps $x$ to $h^{-1}(x)$, if $x\in h(X)$, and is the identity on $K\sm h(X)$. The induced group structure on $X\cup (K\sm h(X)$ has the desired properties.


For $a\in X$, denote by $F_a$ the map $F(a,-): X\to X$. Define a relation $\sim$ on $X^2$, as follows:$$(a, b) \sim (c,d) \,\,\Lrarr\,\, \text{ $F_a\circ F_b$ agrees with $F_c\circ F_d$ on a large subset of $X$}.$$ By elimination of imaginaries,  $\sim$ is a definable equivalence relation.
Let $K\sub X^2$ be a definable set of representatives for $\sim$. We denote by $[(a,b)]$ the equivalence class of $(a,b)$, and by $[a,b]$ its representative in $K$. We aim to equip $K$ with a definable group structure $\la K, *, \bbm 1_K\ra$.


$ $\\
\noindent\textbf{Claim 1.} {\em Let $x\in X$. Then $F_x\circ F_{i(x)}$ agrees with the identity map on a large subset of $X$.}
\begin{proof}[Proof of Claim 1] By Axiom (3)(b).
\end{proof}


 We may thus denote  $\bbm{1}_K=[x, i(x)]\in K$, for any $x\in X$.


$ $\\
\noindent\textbf{Claim 2.}
{\em For every $a, b\in X$, if
$$T=[(a,b)]=\{(x,y)\in X^2:  (x,y)\sim (a,b)\},$$
 then $\pi_1(T)$ and $\pi_2(T)$ are large in $X$.
  }
\begin{proof}[Proof of Claim 2] Pick $z\in X$ such that $F(a, F(b,z))$ is defined. Let $x=F(a, F(b,z))$. Now the set
$$C:= F( Z_z, z)\cap \pi_2 F^{-1}(x) $$ is large, by Axioms (2) and (4). Pick any $e\in C$. Let $c=F(-, e)^{-1}(x)$. Then $F(c,e)=x$ and there is $d\in Z_z$ such that $e=F(d, z)$. That is,
\begin{equation}
  F(c, F(d, z))=F(a, F(b, z)).\label{Fcdz}
\end{equation}
Clearly, for $e'\ne e$ in $S$, the corresponding $c', d'$ satisfy $c'\ne c$ and $d'\ne d$, by injectivity of $F$ in the first coordinate. Hence, we obtain large many such $c$'s and large many such $d$'s. We will be done if we prove that for every two such $c, d$,
\begin{equation}
  F_c\circ F_d \text{ agrees with } F_a\circ F_b \text{ on a large subset of } X.\label{Fcd}
\end{equation}
Fix any two $c,d\in X$ such that (\ref{Fcdz}) holds. Consider the set $A$ of all those $t\in X$, such that all expressions below are defined and are equal:
$$c(d(zt))=c((dz)t)=(c(dz))t=(a(bz))t=a((bz)t)=a(b(zt)).$$
By  Axiom (3)(a), the set $A$ is large in $X$.
Hence, using again injectivity of $F$ in the first coordinate, equality (\ref{Fcd})  holds for large many $zt$'s, as required.
\end{proof}

$ $\\
\noindent\textbf{Claim 3.}
(1)  {\em For every $a,b,c,d\in X$, there are $e, x, y, f\in X$, such that $(a, b)\sim (e, x)$, $(c, d)\sim (y, f)$ and $y=i(x)$.}

 (2)  {\em For every $a,b, c,d,s,t\in X$, there are $e,x,y,z,w,f\in X$, such that $(a, b)\sim (e, x)$, $(c, d) \sim (y,z)$, $(s, t) \sim (w,f)$, $y=i(x)$ and $w=i(z)$.}
\begin{proof}[Proof of Claim 3] We only prove (1), as the proof of (2) is similar.
Consider the sets
$$S=[(a,b)]=\{(e,x)\in X^2 : (a, b) \sim (e,x)\}$$
and
$$T=[(c,d)]=\{(y,f)\in X^2: (c, d) \sim (y, f)\}.$$
By Claim (2), the projections $\pi_1(T)$ and $\pi_2(S)$
are large in $X$. By Axiom (1), $\dim i(\pi_2(S))=k$, so the set
$$i(\pi_2(S))\cap \pi_1(T)$$
is non-empty. Take any $y$ in this set and  let $x\in \pi_2(S)$ with $y=i(x)$.  Then $y\in \pi_1(T)$ and  $x\in \pi_2(S)$, so there are $e,f\in X$ with $(a,b)\sim (e,x)$ and $(c,d)\sim (y,f)$, as needed.
\end{proof}

Now, for every $a,b,c,d, e,f\in X$, define the relation
\begin{align}
  R(a,b,c,d,e,f)\,\,\Lrarr \, & \text{ there are $x,y\in X$ such that $(a, b)\sim (e, x)$, $(c, d)\sim (y, f)$}\notag\\ &\text{ and $y=i(x)$.}\notag
\end{align}
Clearly, $R$ is a definable relation. By Claim 3(1), for every $a,b,c,d\in X$, there are $e,f\in X$ such that $R(a,b,c,d,e,f)$. Moreover, if $R(a,b,c,d,e,f)$, \linebreak$R(a',b',c',d',e',f')$, $(a, b)\sim (a', b')$ and $(c, d)\sim (c', d')$, then $(e, f) \sim (e', f')$. Indeed, let $x,y, x',y'$ witnessing the first two relations.
Then,  by Axiom (3), the set of $z\in X$ for which the following hold
$$e(fz)=e(x(y(fz)))=a(b(c(dz)))=a'(b'(c'(d'z)))=e'(x'(y'(f'z)))=e'(f'z).$$
is large in $X$. 
Hence,  $(e,f)\sim (e', f')$.
We can thus define the following definable operation on $K$:
$$[a,b]*[c,d]=[e,f]\,\,\Lrarr\,\, R(a,b,c,d,e,f).$$

$ $\\
\noindent\textbf{Claim 4.} {\em $K=\la K, *, \bbm{1}_K\ra$ is a definable group.}
\begin{proof}[Proof of Claim 4]
We already saw that the set $K$ and  map $*$ are definable. We prove associativity of $*$. Let $a,b,c,d,s,t\in X$. Take $e,x,y,z,w,f$ as in Claim 3(2). Then, by tracing the definitions, \begin{align}
  ([a,b]*[c,d]) *[s,t]=[e,z]*[w,f]=[e,f] &=[e,x]*[y,f]   \notag\\
  &= [a,b]*([c,d] *[s,t]).\notag
\end{align}
It is also easy to check that $\bbm{1}_K$ is the identity element, using Claim 3(1), and that $[i(b), i(a)]$ is the inverse of $[a,b]$.
 \end{proof}

Consider the map $h:X\to K$ defined as follows. Let $x\in X$. By Axiom (4), there is in particular $(a,b)\in Z$ such that $F(a,b)=x$. By definition of $\sim$, any two such $(a,b)$ are $\sim$-equivalent. We let
$$h(x)=[a,b].$$
Again by definition of $\sim$, it is clear that $h$ is injective.

$ $\\
\noindent\textbf{Claim 5.} {\em The set $h(X)$ is large in $K$.}
\begin{proof}[Proof of Claim 5]
If not, there must be a set $S\sub K\sm h(X)$ of dimension $k$ ($=\dim X$). By  Claim 2, each $\sim$-class has dimension $k$. Therefore, the union
$$T=\bigcup_{[a,b]\in S}[(a,b)]$$
has dimension $2k$. Since $Z$ is large in $X^2$, it must intersect $T$. But for $(c,d)\in Z\cap T$, we have $F(c,d)\in X$, and hence $[c,d]=h(F(c,d))\in h(X)$, contradicting the fact that $[c,d]\in S$.
\end{proof}

Finally, we check that for every $(s,t)\in Z$,
\begin{equation}
  h(F(s,t))=h(s) * h(t).\label{h}
\end{equation}
Let $a,b,c,d\in X$ such that $s=F(a,b)$ and $t=F(c,d)$. As in the proof of Claim 3 (1), we can find $e,x,y,f\in X$ with $F(e,x)=s$ and $F(y,f)=t$ and $y=i(x)$. It follows in particular that
$$h(s)*h(t)=[a,b]*[c,d]=[e,f].$$
Moreover, by Axiom (3), we obtain that for large many $z\in X$,
$$(st)z=((ex)(yf))z=(ex)((yf)z)=(ex)(y(fz))=e(x(y(fz))) =e(fz) =(ef)z.$$
Therefore, using Axiom (2), we obtain $st=ef$.
Hence
$$h(F(s,t))=h(F(e,f))=[e,f],$$
as required.\\


This ends the proof of (i).
For (ii) and (iii), in order to lighten the notation and without loss of generality, we may assume that $\sigma$ is the identity map.\smallskip

\noindent (ii)  We have, for every $(x,y)\in Z$,
$$x*y=F(x,y)=x \oplus y.$$
It follows that for every $a,b, c,d\in X$,
$$a*b=c*d \,\,\Lrarr\,\, a\oplus b=c\oplus d.$$
 Indeed, take $z\in X$, such that all pairs $(b,z), (a, b*z), (a, b\oplus z)$ are in $Z$, and hence
$$a*(b*z)=a\oplus (b\oplus z).$$
Therefore
$$a*b*c=z*d*z\,\,\Lrarr\,\,\ a\oplus b\oplus z=c\oplus d\oplus z,$$
 as needed.

Denote now by $^{-1}$ the inverse map of $K$.
We observe that for every $x\in K$, there are $a,b\in X$, such that $a*b=x$ (indeed, since $X$ is large in $K$, we can choose $a\in X\cap x*X^{-1}$ and $b=a^{-1} * x$). We can thus define the injective map
$$\text{ $\tau: K\to H$ given by $a*b\mapsto a\oplus b$, where $a,b\in X$}.$$
It remains to see that $\tau$ is a group homomorphism. The proof is similar to the one of property (\ref{h}) of $h$ above and is left to the reader.\\

\noindent (iii)  Since $X$ is large in $H$, for every $a\in H$, there is $z\in X$, such that both $a\ominus z$ and $z$ are in $X$, implying that  $H=X\oplus X$. Hence, in this case, the homomorphism $\tau$ constructed above is also onto.\smallskip
\end{proof}


\section{Weakly o-minimal structures}\label{sec-wom}




Here we return to the setting of the introduction, where $\cal M=\la M, <, +, \dots\ra$ is a weakly o-minimal non-valuational expansion of an ordered group, and $\cal N=\la N, <, +, \dots\ra$ its canonical o-minimal extension, by Wencel \cite{wen-nonv}. We recall that $\CN$ is an o-minimal structure whose domain $N$ is the Dedekind completion of $M$, and whose induced structure on $M$ is precisely $\CM$. The precise construction of $\cal N$ from \cite{wen-nonv} will not play any particular role here. We will only need the following facts, which follow easily from the strong cell decomposition theorem proved in \cite{wen-nonv}.

\begin{fact}[Wencel \cite{wen-nonv}]\label{fact-wen} We have:
\begin{enumerate}
\item Let $X$ be a definable set. Then $\cl(X)$ is $\cal N$-definable.
   \item  Suppose $f:X\sub M^n\to M$ is an definable map. Then there is an $\CN$-definable  map $F:N^n\to N$ that extends $f$ (namely, $F_{\res X}=f$).


 \item $\dim X=\dim \cl(X)$.
 \end{enumerate}
\end{fact}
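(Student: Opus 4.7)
The plan is to reduce each clause to Wencel's strong cell decomposition theorem, which partitions any definable $X \sub M^n$ into finitely many \emph{strong cells} $C_1, \ldots, C_k$, each built from \emph{strongly continuous} definable functions $g \colon D \to M$. The key feature is that each such $g$ admits a canonical continuous $\cal N$-definable extension $\widehat{g} \colon \widehat{D} \to N$. Writing $\widehat{C_i} \sub N^n$ for the $\cal N$-definable cell obtained by replacing each defining $g$ in $C_i$ with $\widehat{g}$, I would rely on three facts from \cite{wen-nonv}: $\widehat{C_i} \cap M^n = C_i$; the closure identity $\cl(C_i) = \cl(\widehat{C_i})$ (since $M^n$ is dense in $N^n$ and $\widehat{C_i}$ is a standard $\cal N$-cell); and the equality of combinatorial dimensions $\dim C_i = \dim \widehat{C_i}$.

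Given these, part (1) is immediate: $\cl(X) = \bigcup_i \cl(C_i) = \bigcup_i \cl(\widehat{C_i})$, which is $\cal N$-definable because the closure of an $\cal N$-definable subset of $N^n$ is $\cal N$-definable by o-minimality. For part (3), combining the dimension identity with the standard o-minimal fact $\dim \cl(\widehat{C_i}) = \dim \widehat{C_i}$ (and the paper's observation that on $\cal N$-definable subsets the projection-based dimension coincides with the o-minimal dimension) yields
$$\dim \cl(X) = \max_i \dim \cl(\widehat{C_i}) = \max_i \dim \widehat{C_i} = \max_i \dim C_i = \dim X.$$

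For part (2), I would apply a refined strong cell decomposition so that $f$ is strongly continuous on each piece $C_i$ of its domain, using the strongly-continuous-function version of the decomposition in \cite{wen-nonv}. Each $f|_{C_i}$ then extends canonically to a continuous $\cal N$-definable map $\widehat{f_i} \colon \widehat{C_i} \to N$, and since the $\widehat{C_i}$ are disjoint $\cal N$-definable sets, setting $F(x) = \widehat{f_i}(x)$ for $x \in \widehat{C_i}$ and $F(x) = 0$ otherwise gives an $\cal N$-definable extension of $f$ to all of $N^n$. The mildly delicate point is the compatibility clause in (2) --- that the cell decomposition of $X$ can be arranged so that $f$ is strongly continuous on each cell --- but this is exactly what the enhanced version of the theorem in \cite{wen-nonv} provides; otherwise the entire argument is a direct bookkeeping on top of the canonical extensions $\widehat{g}$.
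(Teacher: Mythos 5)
Your proposal is correct and follows exactly the route the paper intends: the paper gives no argument beyond asserting that these facts ``follow easily from the strong cell decomposition theorem proved in \cite{wen-nonv}'', and your reduction to strong cells, their completions $\widehat{C_i}$, and the canonical $\cal N$-definable extensions of strongly continuous functions is precisely that intended derivation. The only loose point is the justification of $\cl(C_i)=\cl(\widehat{C_i})$ (density of $M^n$ in $N^n$ alone is not the reason; one argues by induction on the cell structure that $C_i$ is dense in $\widehat{C_i}$), but since you cite this as a fact from \cite{wen-nonv} it matches the paper's level of detail.
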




We  need some additional terminology:  if $X, Z\sub N^n$, we call $X$ dense in $Z$ if $Z\sub \cl(X\cap Z)$. Equivalently, $X$ intersects every relatively open subset of $Z$. We write $X\triangle Z=(X\sm Z)\cup (Z\sm X)$. By a $k$-cell we mean a cell in $\cal N$ of dimension $k$.


Our goal is to establish Proposition \ref{findchunk} below, from which Theorem \ref{main} will follow. We first need to ensure that injective definable maps always extend to injective $\cal N$-definable maps. We prove this uniformly in parameters. In the next lemma, $\pi_1:M^{k+n}\to M^m$ denotes the projection onto the first $k$ coordinates.

\begin{lemma}\label{lem-inj}
Let $f:A\sub M^{k+n}\to M^m$ be a definable map, and $F: N^{k+n}\to N^m$ an $\CN$-definable extension. Assume that for every $t\in \pi_1(A)$, $f_t: A_t\to M^m$ is injective. Then there is an $\cal N$-definable set $A\sub Y\sub N^{k+n}$, such that for every $t\in \pi_1(Y)$, $F_t: Y_t\to N^m$ is injective.
\end{lemma}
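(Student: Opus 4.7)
The plan is to define $Y$ as the $\mathcal{N}$-definable subset of $\cl(A)$ where fiberwise collisions of $F$ inside $\cl(A)$ are ruled out. Concretely, I would take
\[
Y \;=\; \{(t,x)\in\cl(A)\,:\,\forall\,x'\in\cl(A)_t,\ F(t,x)=F(t,x')\Rightarrow x=x'\}.
\]
By Fact~\ref{fact-wen}(1), $\cl(A)$ is $\mathcal{N}$-definable, so $Y$ is $\mathcal{N}$-definable. Fiberwise injectivity of $F_t$ on $Y_t$ is immediate from the defining condition: for $x,x'\in Y_t$ with $F(t,x)=F(t,x')$, one has $x'\in\cl(A)_t$, and the clause on $x$ forces $x=x'$. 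The whole task reduces to verifying the inclusion $A\subseteq Y$.

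To prove $A\subseteq Y$, fix $(t,x)\in A$ and $x'\in\cl(A)_t$ with $F(t,x)=F(t,x')$; the goal is $x=x'$. The case $x'\in A_t$ is immediate, since $F$ agrees with $f$ on $A$ and $f_t$ is injective on $A_t$. The hard case is $x'\in\cl(A)_t\setminus A_t$, where nothing obvious forces $F$ to respect the frontier behavior of $f$.

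To rule out the hard case, I would apply the strong cell decomposition of \cite{wen-nonv} to $A$, together with a cell decomposition in $\mathcal{N}$ of $\cl(A)$ compatible with the graph of $F$ and with the projection $\pi_1$, refining so that on each cell $F$ is continuous and the $\pi_1$-fibers of $\cl(A)$ behave uniformly. Since $A$ is dense in $\cl(A)$ by definition of closure, for generic $t$ the slice $A_t$ is dense in the top-dimensional pieces of $\cl(A)_t$. Then continuity of $F$ on cells, density of $A_t$, and injectivity of $f_t$ combine to show that a putative collision $F(t,x)=F(t,x')$ with $x\in A_t$ and $x'\in\cl(A)_t\setminus A_t$ would be approximated by a collision between two points of $A_t$, contradicting injectivity of $f_t$. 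For non-generic $t$ or on lower-dimensional cells, the same argument is iterated on $\mathcal{N}$-definable strata, keeping the defining formula of $Y$ uniform.

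The main obstacle I anticipate is precisely the fiberwise density step: in general $\cl(A)_t$ may strictly contain $\cl(A_t)$, so one cannot just quote a topological continuity argument to rule out collisions at boundary points. Overcoming this requires a genuine use of strong cell decomposition, which aligns the slices with the ambient cell structure, together with either an induction on $\dim A$ or a preliminary refinement of $\cl(A)$ ensuring that $\cl(A)_t=\cl(A_t)$ holds on each piece. Once this is available, the rest of the argument is a routine fiberwise transfer.
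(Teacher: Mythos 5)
There is a genuine gap, and it lies exactly where you located the difficulty: the inclusion $A\sub Y$ can simply fail for your choice of $Y$, and the density/continuity repair you sketch cannot close it. The core of your argument is the claim that a collision $F(t,x)=F(t,x')$ with $x\in A_t$ and $x'\in \cl(A)_t\sm A_t$ ``would be approximated by a collision between two points of $A_t$''. But injectivity is not a condition that passes to limits: approximating $x'$ by points $x''\in A_t$ only gives $F(t,x'')$ \emph{close} to $F(t,x)$, and near-collisions do not contradict injectivity of $f_t$. A concrete instance in our setting: take $k=0$, $n=m=1$, let $\alpha\in N\sm M$ be an $\cal N$-definable cut of $M$ (non-valuational structures have many such), chosen with $2\alpha\notin M$, and let $F(x)=|x-\alpha|$, $A=M\cap(0,2)$, $f=F_{\res A}$. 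Then $f$ is a definable injective map (a collision would force $x+y=2\alpha\in M$), while every $x\in A$ sufficiently near $\alpha$ collides under $F$ with its mirror point $2\alpha-x\in \cl(A)\sm A$. So your set $Y$ excludes essentially all of $A$, and no cell decomposition or stratification aligning $\cl(A)_t$ with $\cl(A_t)$ can change this, because the obstruction is not the fiberwise-closure issue you flag but the false limit argument itself.

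The paper avoids this entirely by a different mechanism: since each $f_t$ is injective, the family of inverses $f_t^{-1}$ is a definable family of maps, so by Fact \ref{fact-wen}(2) there is an $\cal N$-definable family $G_t:N^n\to N^m$ (coordinatewise) extending the $f_t^{-1}$, and one sets $Y_t=\{x\in N^n: G_t\circ F_t(x)=x\}$. Then $A_t\sub Y_t$ is automatic (on $A_t$, $G_t\circ F_t=f_t^{-1}\circ f_t=\mathrm{id}$), and injectivity of $F_t$ on $Y_t$ is immediate because $F_t$ has a definable left inverse there: if $F_t(x)=F_t(y)$ with $x,y\in Y_t$, then $x=G_tF_t(x)=G_tF_t(y)=y$. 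In particular, points like $2\alpha-x$ in the example are silently dropped from $Y_t$ rather than being allowed to disqualify points of $A_t$. If you want to salvage your approach, you would have to replace the ``no collision with any point of $\cl(A)_t$'' clause by a condition certified from within $A$, which is exactly what the left-inverse trick accomplishes.
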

\begin{proof}
By Fact \ref{fact-wen}(2), there is an $\CN$-definable family of functions $G_t: N^n\to N$, $t\in N^k$, such that for every $t\in \pi_1(A)$, $G_t$  extends $f^{-1}_t$. Let
$$Y_t=\{x\in N^n: G_t\circ F_t(x)=x\}.$$
Clearly, $A_t\sub Y_t$. Moreover,  for every $x,y\in Y_t$, if $F_t(x)=F_t(y)$, then
$$x=G_t\circ F_t(x)= G_t\circ F_t(y)=y,$$
and hence ${F_t}_{\res Y_t}$ is injective.
\end{proof}

For the rest of this section we fix a definable group $G=\la G, \cdot, \bbm 1_G\ra$ with $G\sub M^n$. Denote by $^{-1}$ its inverse.

 \begin{cor}\label{cor-inj} There are
\begin{enumerate}
   \item an $\cal N$-definable $i:N^n\to N^n$ that extends $^{-1}$,
   \item an \cal N-definable $F:N^{2n}\to N^n$ that extends $\cdot$,

\item  an $\cal N$-definable set $X\sub N^n$ containing $G$, such that $i_{\res X}$ is injective, and
\item an \cal N-definable set $Z\sub N^{2n}$ containing $G^2$, such that
\begin{itemize}
  \item for every $x\in \pi_1(Z)$, the map $F(x, -): Z_x\to F(x, Z_x)$ is injective,
  \item for every $x\in \pi_2(Z)$, the map $F(-, x):Z^x\to F(Z^x, x)$ is injective.
\end{itemize}
\end{enumerate}
 \end{cor}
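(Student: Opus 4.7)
The plan is a routine application of Lemma \ref{lem-inj}, together with Fact \ref{fact-wen}(2), to the group operations of $G$. Parts (1) and (2) would come from applying Fact \ref{fact-wen}(2) coordinatewise to the definable maps $^{-1}:G\to M^n$ and $\cdot:G^2\to M^n$, yielding $\cal N$-definable extensions $i:N^n\to N^n$ and $F:N^{2n}\to N^n$. The minor point here is that Fact \ref{fact-wen}(2) is formulated for maps into $M$, so each coordinate must be extended separately and then reassembled.

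For (3), I would apply Lemma \ref{lem-inj} with $k=0$, $A=G$, $f={}^{-1}$ and $F=i$. The hypothesis is trivial because $^{-1}$ is a bijection of $G$, so the lemma immediately supplies an $\cal N$-definable set $X\supseteq G$ on which $i$ is injective.

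Part (4) is the only piece with any content, and the plan is to apply Lemma \ref{lem-inj} once in each coordinate and intersect. For the first injectivity condition, I take $k=n$, $A=G^2$, $f=\cdot$: for each fixed $g\in G$, left translation $h\mapsto g\cdot h$ is a bijection of $G$, so Lemma \ref{lem-inj} applied to $F$ produces an $\cal N$-definable set $Y_1\supseteq G^2$ with $F(x,-):(Y_1)_x\to N^n$ injective for every $x\in\pi_1(Y_1)$. For the second condition, I would work with the coordinate-swapped map $\tilde F(y,x):=F(x,y)$, which is still $\cal N$-definable and extends the swapped group operation; for each fixed $y\in G$, the map $x\mapsto x\cdot y$ is again a bijection of $G$, so Lemma \ref{lem-inj} gives an $\cal N$-definable $\tilde Y_2\supseteq G^2$ with the relevant fibrewise injectivity. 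Unswapping, I set $Y_2:=\{(x,y):(y,x)\in\tilde Y_2\}$, so that $F(-,y):(Y_2)^y\to N^n$ is injective for every $y\in\pi_2(Y_2)$. Finally $Z:=Y_1\cap Y_2$ contains $G^2$ and, because $Z_x\subseteq(Y_1)_x$ and $Z^y\subseteq(Y_2)^y$, injectivity is inherited on both types of fibres.

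There is no substantive obstacle: the corollary is essentially a uniform-in-parameters version of the previous lemma applied to the two group operations. The only thing one has to be careful about is the coordinate swap used to deduce injectivity in the second variable of $F$ from the form of Lemma \ref{lem-inj}, which is stated only for the projection onto the first block.
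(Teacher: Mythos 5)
Your proposal is correct and takes essentially the same approach as the paper, whose proof is the one-liner ``By Fact~\ref{fact-wen}(2), there are $i$, $F$ as in (1)--(2). By Lemma~\ref{lem-inj}, (3)--(4) follow.'' Your coordinatewise extension for (1)--(2), the $k=0$ instance of Lemma~\ref{lem-inj} for (3), and the two applications (with a coordinate swap) followed by intersection for (4) are precisely the routine unpacking that the paper leaves implicit.
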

 \begin{proof} By Fact \ref{fact-wen}(2), there are $i$, $F$ as in (1)-(2). By Lemma \ref{lem-inj}, (3)-(4) follow.
 \end{proof}

We can now achieve the connection to Section \ref{sec-gpchunk}.

\begin{prop}\label{findchunk} There is an $\cal N$-definable group chunk $\la X, i_{\res X}, F_{\res Z}\ra$, such that $G^2\sub Z$, $G\sub X\sub \cl(G)$, 
  $F_{\res G^2}=\cdot$, and $i_{\res G}= ^{-1}$.
\end{prop}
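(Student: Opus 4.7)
The plan is to take the $\cal N$-definable data $F, i, X_0, Z_0$ supplied by Corollary \ref{cor-inj} and cut them down to $\cal N$-definable sets $X \subseteq \cl(G)$ and $Z \subseteq X^2$ on which the chunk axioms are forced by the group structure of $G$. The geometric input from the non-valuational setting is the frontier inequality
\[
\dim(\cl(G) \setminus G) \;<\; d, \qquad d := \dim G = \dim \cl(G)
\]
(the equality is Fact \ref{fact-wen}(3)), available via Wencel's strong cell decomposition. The consequence I would exploit throughout is that any $\cal N$-definable set sandwiched between $G$ and $\cl(G)$ differs from $\cl(G)$ only on a set of dimension $< d$, so the notion ``large in $Y$'' (i.e.\ complement of dimension $<d$) transfers freely between $X_0$, $\cl(G)$, and any such intermediate $Y$. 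Property (D3)(a) keeps every locus of the form ``$x$ with a given fibre large'' $\cal N$-definable.

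After replacing $X_0$ by $X_0 \cap \cl(G)$ and $Z_0$ by $Z_0 \cap X_0^2 \cap F^{-1}(X_0)$---which preserves $G \subseteq X_0 \subseteq \cl(G)$, $G^2 \subseteq Z_0$, and ensures $F(x,(Z_0)_x) \subseteq X_0$ for $x \in X_0$---I would define
\begin{align*}
X := \{ x \in X_0 : & \ (Z_0)_x,\ (Z_0)^x,\ F(x, (Z_0)_x),\ F((Z_0)^x, x), \\
& \ \pi_1(Z_0 \cap F^{-1}(x)),\ \pi_2(Z_0 \cap F^{-1}(x)) \text{ are each large in } X_0 \},
\end{align*}
and
\[
Z := \{(x,y) \in Z_0 \cap X^2 \cap F^{-1}(X) : S_{(x,y)} \text{ is large in } X\},
\]
where $S_{(x,y)} := \{z \in X : \text{the expressions in axiom (3)(a)--(b) are defined and equal}\}$. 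For $x \in G$ each of the six sets in the definition of $X$ contains $G$ (for instance $(Z_0)_x \supseteq G$ because $\{x\} \times G \subseteq G^2$, and $\pi_1(Z_0 \cap F^{-1}(x)) \supseteq G$ via $g \mapsto (g, g^{-1}x)$), so each complement in $X_0$ lies in $\cl(G) \setminus G$ and has dimension $< d$; hence $G \subseteq X$. For $(x,y) \in G^2$ the equations in axiom (3) are literally associativity and the inverse law in $G$, so $S_{(x,y)} \supseteq G$ and therefore $G^2 \subseteq Z$.

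Axiom (1) is inherited from Corollary \ref{cor-inj}; axiom (3) is built into $Z$; axiom (4) and the largeness halves of axiom (2) are built into $X$, modulo transferring ``large in $X_0$'' to ``large in $X$''---which is legitimate because $X_0 \setminus X \subseteq \cl(G) \setminus G$ has dimension $< d$; injectivity of $F(x,-)$ and $F(-,x)$ comes from Corollary \ref{cor-inj}; and $F(x, Z_x) \subseteq X$ is forced by the $F^{-1}(X)$ clause in $Z$. The main obstacle is precisely this chain of largeness transfers, i.e.\ the circular dependence of the chunk axioms on $X$ and $Z$ while $X$ itself is defined only in terms of the fixed sets $X_0, Z_0$. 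My plan sidesteps the circularity by arranging every shrinking loss ($X_0 \setminus X$, $Z_0 \cap X^2 \setminus Z$, the $F(x,-)$-preimage of $X_0 \setminus X$, and the locus where the $S$-condition fails) to sit inside $\cl(G) \setminus G$ or an injective image thereof, hence of dimension $< d$; the frontier inequality is the single nontrivial fact that makes all of this go through.
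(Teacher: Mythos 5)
Your overall strategy is the paper's: start from the extensions of Corollary \ref{cor-inj}, intersect with $\cl(G)$, and cut down by $\cal N$-definable largeness conditions whose loci contain $G$, the engine being that any $\cal N$-definable set $Y$ with $G\sub Y\sub \cl(G)$ is large in $\cl(G)$. (For that engine, you do not need a frontier inequality for the non-$\cal N$-definable set $\cl(G)\sm G$; it is cleaner to note that $\cl(Y)=\cl(G)$ and apply the o-minimal frontier inequality to $Y$, equivalently use density of $G$ in $\cl(G)$.) In fact you are more explicit than the paper about the demands of Axiom (2) (largeness of the images $F(x,Z_x)$, $F(Z^x,x)$ and the clause $F^{-1}(X)$ in $Z$), which the paper leaves to its final ``straightforward to check''.

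There is, however, one step whose stated justification fails: the survival of fiberwise largeness after imposing the $S_{(x,y)}$-largeness condition. Axioms (2) and (4) are quantified over \emph{every} $x\in X$, including $x\in X\sm G$, and for such $x$ the bad set $P_x=\{y\in (Z_0)_x : S_{(x,y)} \text{ not large}\}$ is \emph{not} contained in $\cl(G)\sm G$, nor is it an injective image of it: nothing forces $S_{(x,y)}\supseteq G$ when $x\notin G$ and $y\in G$, so $P_x$ may meet $G$. Disjointness of the bad pair set $P\sub Z_0$ from $G^2$ only gives $\dim P<2d$ (since $P$ is $\cal N$-definable, lies in $\cl(G)^2=\cl(G^2)$ and misses the dense set $G^2$); it does not bound each fiber $P_x$, which is what you need to keep $Z_x$ large. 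The repair is within your toolkit: by (D3) and o-minimal fiber dimension, $E=\{x:\dim P_x=d\}$ is $\cal N$-definable of dimension $<d$, and $E\cap G=\es$ because for $x\in G$ the fiber $P_x$ is an $\cal N$-definable subset of $\cl(G)$ disjoint from $G$; so shrink $X$ to $X\sm E$ (equivalently, add to the definition of $X$ the condition ``for large many $y$, $S_{(x,y)}$ is large'', computed relative to the fixed $X_0,Z_0$ to avoid circularity), and then all the largeness transfers you set up go through. This is exactly how the paper's $X_2$ handles Axiom (3)(b), and it is the content hidden in the paper's closing sentence; with this patch your proof matches the paper's.
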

\begin{proof} Let $i, F, X, Z$ be as in Corollary \ref{cor-inj}. Let $X_1=\cl(G)\cap X$, and denote $Z_1=Z$.

For every $x\in G$, the set of $z\in X_1$ for which Axiom (3)(b) holds contains $G$, and hence, since $X_1\sub \cl(G)$, it is large in $X_1$. Let  $X_2$ be the set of all $x\in X_1$, for which the set of $z\in X_1$ such that Axiom 3(b) holds is large in $X_1$. Hence $G\sub X_2$.

Similarly, for every $x\in G$, $\pi_1 F^{-1}(x)$ and $\pi_2 F^{-1}(x)$ contain $G$, and hence are large in $X_1$. Let
$$X_3=\{x\in X_1: \text{$\pi_1 F^{-1}(x)$ and $\pi_2 F^{-1}(x)$ are large in $X_1$}\}.$$
So $G\sub X_3$. Therefore, for $X=X_2\cap X_3\cap \cl(G)$, we have  $G\sub X\sub \cl(G)$.

Finally, for every $(x,y)\in G^2$, the set of $z\in X$ for which Axiom (3)(a) holds contains $G$ and hence is large in $X$. Let $Z$ be the set of tuples $(x,y)\in Z_1\cap X^2$ for which  the set of $z\in X$ such that Axiom (3)(a) holds is large in $X$. Hence $G^2\sub Z$.

It is then straightforward to check that $X, i_{\res X}, F_{\res Z}$ are as required.
\end{proof}

We are now ready to prove the main result of this paper.


\begin{proof}[Proof of Theorem \ref{main}]
Let $X, Z, F, i$ be as in Proposition \ref{findchunk}, and $\la K, *\ra$ the $\cal N$-definable group from Theorem \ref{thm-gpchunk}. Then $G\leqslant K$. Indeed, for every $(x,y)\in G^2\sub Z$, $x\cdot y=F(x,y)=x*y$. We show that  for any $\cal N$-definable group $H$ that contains $G$ as a subgroup, $K$ $\cal N$-definably embeds in $H$. Consider $Z'=H^2 \cap Z$ and $X'=\pi_1(Z')\cap \pi_2(Z')$. Then Proposition \ref{findchunk} also holds with $X', Z', F, i$. Apply Theorem \ref{thm-gpchunk}(i) to get a group $K'$. Observe moreover that $X'$ is large in $K$.
Now, on the one hand, by Theorem \ref{thm-gpchunk}(iii) applied to $K'$ and $K$, we obtain that the two are $\cal N$-definably isomorphic. On the other hand, applying Theorem \ref{thm-gpchunk}(ii) to $K'$ and $H$, we obtain that $K'$ $\cal N$-definably embeds in $H$. Therefore $K$ $\cal N$-definably embeds in $H$.


For the ``moreover" clause, take a $K$-open subset $U$ of $K$. Then $\dim U =\dim K$.  By Theorem \ref{thm-gpchunk}(i), $X$ is large in $K$, and hence $\dim (U\cap X)=\dim X$. By Proposition \ref{findchunk}, $G$ is dense in $X$, and hence $U\cap X\cap G\ne \es$, showing $\cl_K(G)=K$. We also have $\dim G= \dim \cl(G)=\dim X=\dim K$.
\end{proof}

 \begin{remark}\label{rmk-gtop}
It follows from Theorem \ref{main} that $G$ admits a group topology and contains a large subset on which the group topology coincides with the subspace one. Indeed, the restriction of the $K$-topology to $G$ is a group topology, and if $V$ is a large subset of $K$ on which the $K$-topology coincides with the subspace one, then  $V\cap G$ also has the same properties with regard to $G$, as can easily be shown. As mentioned in the introduction, these results  already follow from \cite[Theorem 4.10]{wen-gps}.
\end{remark}

\begin{remark}\label{rem-sym} Since the subspace and the group topologies coincide on a large subset $V$ of $K$, it is easy to see that, for every $X\sub K$,
$$\dim (\cl_K(X)\triangle \cl(X))<\dim K.$$
Indeed, since $V$ is large,  we may assume $X\sub V$. But then the relative closure of $X$ in $V$ in either topology is the same, whereas the remaining parts of the closures have dimension $<\dim K$.
\end{remark}

\begin{remark}\label{rem-can1} It is also worth noting that   $K$ is the smallest $\cal N$-definable group containing $G$, even up to definable isomorphism. Namely, for every  definable group isomorphism $\rho:G\to \rho(G)\sub H$ and $\cal N$-definable group $H$, there is an $\cal N$-definable embedding $f:K\to H$. Indeed, by Lemma \ref{lem-inj}, $\rho$ extends to an injective $\cal L$-definable map $\sigma:X\to \sigma (X)$, with $G\sub X\sub K$.  We can pullback the multiplication and inverse of $K$ (restricted to $\sigma(X)^2$ and $\sigma(X)$) to $X^2$ and $X$, respectively. Call those pullbacks $F$ and $i$. It is then not hard to see that $X, Z=X^2, F, i$ satisfy the assumptions of Theorem \ref{thm-gpchunk}, and, moreover, those of (ii) therein. Hence $K$ $\cal L$-definably embeds in $H$, as needed.
\end{remark}




We finish this section with some open questions. By \cite{wen-nonv}, every definable set is the trace of an $\CN$-definable set; that is, of the form $Y\cap M^n$, for some $\cal N$-definable set $Y\sub N^n$. It is therefore natural to ask the following question.

\begin{question} In Theorem \ref{main}, can $K$ be chosen so that, moreover,  $K\cap M^n=G$?
\end{question}

In general, one can ask for what structures $\cal R_1\preccurlyeq \cal R_2$ the following  statement is true: given a group $G=R_2^n \cap X$, for some $\cal R_2$-definable set $X$, is $G$ a subgroup of an $\cal R_2$-definable group $K$, such that: (a) $K$ is the smallest such group,  (b) $G=R_2^n \cap K$?\vskip.1cm


Finally, the following question was asked by Elias Baro in private communication.

\begin{question} Assume $\cal M$ is the trace of a real closed field \cal N on a dense  $\cal R\preccurlyeq \cal N$. Is every definable group $\la \cal N, \cal R\ra$-definably isomorphic to a group definable in $\cal R$?
\end{question}

\section{Pillay's Conjecture}\label{sec-app}

In Section \ref{sec-pi} below, we will assume further that $\cal M$ is sufficiently saturated, and prove Theorem \ref{app} by means of Claims \ref{G00}, \ref{G/G00} and \ref{cdom} below. It is important for us that the canonical extension \cal N is presented in some signature so that it also becomes sufficiently saturated (and hence Pillay's Conjecture for $\cal N$-definable groups holds). Towards that end, we adopt the following definition from \cite{bhp}. Denote by \cal L the language of \cal M.

\begin{defn}
$\cal M^*$ is the \cal L-structure whose domain is the Dedekind completion $N$ of $M$, and for every $n$-ary $R\in \cal L$, we interpret $R^{\cal M^*}=\cl(R^{\cal M})$.
\end{defn}

\begin{fact}\label{fact-M*} We have:
\begin{enumerate}
\item $\cal M^*$ and \cal N have the same definable sets.
\item \cal M and $\cal M^*$ have the same degree of saturation.
\end{enumerate}
\end{fact}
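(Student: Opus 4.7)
The first claim states that the $\cal L$-structure $\cal M^*$ and the canonical o-minimal extension $\cal N$ have the same definable subsets of $N^n$. I would prove the two inclusions separately. For $\cal M^*$-definable $\Rightarrow$ $\cal N$-definable, I would induct on $\cal L$-formula complexity: the atomic case reduces to the observation that $R^{\cal M^*} = \cl(R^{\cal M})$ is $\cal N$-definable by Fact \ref{fact-wen}(1), and function symbols in $\cal L$ are handled via their graphs using Fact \ref{fact-wen}(2). Boolean connectives and quantifiers over $N$ preserve $\cal N$-definability, concluding the induction.

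For the reverse inclusion $\cal N$-definable $\Rightarrow$ $\cal M^*$-definable, I would invoke the strong cell decomposition of \cite{wen-nonv}: every $\cal N$-definable set is a finite disjoint union of strong cells, whose defining data are closures of graphs of $\cal M$-definable continuous functions. Each such datum corresponds to an $\cal L$-formula whose interpretation in $\cal M^*$ (where atomic relations become closures) yields exactly the required set. Combining the two inclusions gives (1).

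The second claim follows, via (1), by comparing saturation of $\cal M$ with that of $\cal M^*$ (equivalently, $\cal N$) directly, using the shared universe $N$ and the fact that types depend only on definable sets. Given a type $p$ over $A \subseteq N$ with $|A| < \kappa$, finitely satisfiable in $\cal M^*$, I would use strong cell decomposition to rewrite each formula in $p$ as a Boolean combination of closure conditions involving $\cal M$-definable data with parameters from a small set $B \subseteq M$; each $a \in A \setminus M$ represents a Dedekind cut in $M$, and its contribution is captured by $\cal M$-definable conditions involving boundedly many parameters from $M$ (the non-valuational hypothesis crucially preventing the cuts from forcing $|M|$-many essential parameters). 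Saturation of $\cal M$ over $B$ then yields the required realization, which transfers to $N$. The converse direction proceeds analogously, by extending types from $\cal M$ to $\cal M^*$ and descending realizations from $N$ to $M$ using density and the strong cell structure.

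The main obstacle throughout is the ``section-of-closure versus closure-of-section'' asymmetry, namely that interpreting $\phi(x, \bar b)$ in $\cal M^*$ yields the $\bar b$-section of $\cl(\phi^{\cal M})$, which in general differs from $\cl(\phi^{\cal M}(\cdot, \bar b))$, and this affects the behaviour of negated atomic formulas under the naive lift of a type. Wencel's strong cell decomposition circumvents this by providing canonical generators whose closures behave predictably under parameter substitution, enabling a uniform translation of types between $\cal M$ and $\cal M^*$ that preserves cardinality bounds.
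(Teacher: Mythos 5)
Your first inclusion is fine and matches the paper: since $\cl(R^{\cal M})$ is $\cal N$-definable (Fact \ref{fact-wen}(1)), $\cal M^*$ is a reduct of $\cal N$. But the paper's proof of everything else is a citation to \cite{bhp} (Theorem 2.10, Propositions 2.7 and 3.4(2)), so a blind proof must actually reprove that material, and your sketch does not. For the converse direction of (1), strong cell decomposition is a theorem about $\cal M$-definable sets; an arbitrary $\cal N$-definable set has parameters in $N\setminus M$ and is obtained by Boolean operations and quantification from the basic relations of $\cal N$, which are completions (not closures) of strong cells, so one must first reduce to such data and then show each completion is definable in the specific reduct $\cal M^*$, whose atomic relations are only the closures of the atomic relations of $\cal M$. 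Your assertion that each cell datum ``corresponds to an $\cal L$-formula whose interpretation in $\cal M^*$ yields exactly the required set'' is precisely the statement to be proved: closure does not commute with negation, quantification, or taking fibres, so naively reinterpreting a defining formula in $\cal M^*$ need not produce the closure or completion of the set it defines in $\cal M$. You name this obstacle yourself, and then dismiss it with an appeal to ``canonical generators whose closures behave predictably,'' which is an assertion, not an argument; making it precise is the content of the cited results of \cite{bhp}.

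The argument offered for (2) contains a step that is wrong as stated: an element $a\in N\setminus M$ is an arbitrary Dedekind cut of $M$, and such a cut need not be definable in $\cal M$ with any number of parameters; non-valuationality constrains \emph{definable} cuts only, so it cannot ``prevent the cuts from forcing many essential parameters'' in the way you claim. Moreover, the plan of realizing a type over $A\subseteq N$ inside $\cal M$ and transferring the realization to $N$ does not engage with what ``same degree of saturation'' can mean for a structure whose order is Dedekind complete (for instance, a type over a countable parameter set approaching an attained infimum strictly from above is finitely satisfiable yet omitted in $N$), so the transfer statement has to be formulated and proved with care; this is exactly \cite[Proposition 3.4(2)]{bhp}, which the paper cites rather than reproves. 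As it stands, your proposal is a plausible plan for the easy half of (1) and an outline of intentions for the rest, not a proof.
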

\begin{proof}
(1) Clearly, $\cal M^*$ is a reduct of \cal N. For the opposite direction, note that in \cite{bhp}, $\cal M^*$ is denoted by $\cal M^*_\es$, and $\cal N$ by $\overline{\mathcal M}_M$. The conclusion then follows from \cite[Theorem 2.10, Proposition 2.7 and Proposition 3.4(2)]{bhp}.

(2) By \cite[Proposition 3.4(2)]{bhp}.
\end{proof}

In view of Fact \ref{fact-M*}(1), everything proven for \cal N in the previous sections still holds for $\cal M^*$. We may thus, from now on, let \cal N denote $\cal M^*$.


\subsection{Preliminaries} In this subsection, we fix a sufficiently saturated structure \cal R, and definability is taken with respect to \cal R. For a definable group $G$, the notion of having \emph{finitely satisfiable generics} ($fsg$) was introduced in \cite{hpp}, generalizing the notion of definable compactness from o-minimal structures. By now, several other statements involving Keisler measures have been shown to be equivalent to $fsg$. Below, we adopt as a definition of $fsg$ a statement that involves \emph{frequency interpretation measures}, first introduced in \cite{hps2}. Our account uses \cite{simon-book} and  \cite{star}.

For the next three definitions, $G$ denotes a definable group with $G\sub R^n$.

\begin{defn}
A \emph{Keisler measure} $\mu$ on $G$ is a finitely additive probability measure on the class $Def(G)$ of all definable subsets of $G$; that is, a map $\mu: Def(G)\rarr [0,1]$ such that
$\mu(\emptyset)=0$, $\mu(G)=1$, and for $Y,Z\in Def(G)$, $$\mu(Y\cup Z)=\mu(Y)+\mu(Z)-\mu(Y\cap Z).$$
\end{defn}

Given a definable set $X\sub R^n$ and elements $a_1, \dots, a_k\in R^n$, we denote
$$\mathrm{Av}( a_1, \dots,  a_k;X)|= \frac{1}{k}|\{i:  a_i\in X\}|.$$

 \begin{defn}
A Keisler measure $\mu$ on $G$ is called  a \emph{frequency interpretation measure ($fim$)}  if  for  every  formula $\varphi(x;y)$ and  $\ve >0$, there are $a_1,\dots, a_k\in R^n$, such  that  for  every  $c\in R^m$, and for  $X=\varphi(R;  c)$, we have $$|\mu(X)-\mathrm{Av}( a_1, \dots, a_k;X)|< \ve.$$
 \end{defn}

\begin{defn}\label{def-fsg}
We say that $G$ has \emph{finitely satisfiable generics} ($fsg$) if it admits a left-invariant $fim$ Keisler measure.
\end{defn}

\begin{remark}
Our definition of $fsg$ is equivalent to the original definition given in \cite{hpp}. Indeed, by \cite[Theorem 3.32]{star}, $\mu$ has $fim$ if and only if it is `generically stable', and by \cite[Proposition 8.33]{simon-book}, $G$ has $fsg$ in the sense of \cite{hpp} if and only if $G$ admits a generically stable Keisler measure.
\end{remark}


Recall from the introduction that by a \emph{small} set or a set of \emph{bounded cardinality}, we mean a set of cardinality smaller than $|R|$. By a type-definable set $X$ we mean an intersection of a bounded collection of definable sets $X_i$. We write 
$X=\bigcap_i X_i$ without specifying the index set.

For the next two statements, we assume that $\cal R$ is our fixed \cal M or \cal N. For any $X\sub R^n$, $\dim X$ is the maximum $k$ such that some projection onto $k$ coordinates contains an open set. This notion is the same with the one defined in the introduction.


\begin{fact}\label{bddindex}
Let $H$ be a type-definable subgroup of a definable group $G$. If $H$ has bounded index, then $\dim H=\dim G$.
\end{fact}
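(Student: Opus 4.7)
The plan is to proceed by contradiction: suppose $\dim H<k:=\dim G$. Since $H$ has bounded index, I would write $G=\bigcup_{\alpha<\lambda}g_\alpha H$ with $\lambda<|R|$. Writing $H=\bigcap_i H_i$ with each $H_i$ definable, left-multiplication $L_{g_\alpha}$ is a definable bijection of $G$ and preserves dimension, so $\dim(g_\alpha H)=\dim H<k$ for every $\alpha$. Since $\dim G=k$, I would fix a projection $\pi$ onto $k$ coordinates such that $\pi(G)$ contains a definable open set $U\subseteq R^k$ of dimension $k$.

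The crucial step is then to find, for each $\alpha$, a definable superset $Y_\alpha\supseteq g_\alpha H$ with $\dim Y_\alpha<k$. Granting this, $U\subseteq\pi(G)\subseteq\bigcup_{\alpha<\lambda}\pi(Y_\alpha)$ covers the definable set $U$ by $\lambda<|R|$ definable sets of dimension $<k$. Since $\cal R$ is sufficiently saturated, compactness reduces this to a finite subcover $U\subseteq\bigcup_{j=1}^n\pi(Y_{\alpha_j})$; its union has dimension $<k$, contradicting $\dim U=k$.

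The main obstacle is the crucial step, since in general a type-definable set $\bigcap_i H_i$ can have strictly smaller dimension than each $H_i$. One route is to use strong cell decomposition (from \cite{wen-nonv} for $\cal M$, classical for $\cal N$) together with the subgroup property $HH=H$ to refine $\{H_i\}$ into approximate subgroups whose dimensions converge to $\dim H$. A cleaner alternative is to reduce to the statement $\dim G^{00}=\dim G$: by Shelah's theorem, $G^{00}$ exists in the NIP structures $\cal M$ and $\cal N$, and $G^{00}\subseteq H$ by minimality, so it would suffice to prove this equality. For $\cal R=\cal N$ it is a classical o-minimal fact; for $\cal R=\cal M$ one transfers via the embedding $G\hookrightarrow K$ from Theorem \ref{main}, using $\dim G=\dim K$ and the density of $G$ in $K$.
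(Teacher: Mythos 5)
The paper itself dismisses this statement with ``Easy, by compactness'', so the real question is whether your outline completes that exercise, and it does not: the step you yourself flag as crucial is a genuine gap, not a technicality. With the paper's definition of dimension (which applies to \emph{arbitrary} subsets of $R^n$ via projections and open sets), the hypothesis $\dim H<k$ gives you no definable superset $Y\supseteq H$ with $\dim Y<k$; worse, if the Fact is true then no such $Y$ can exist, since every definable superset of a bounded-index type-definable subgroup has dimension $k$. So the contradiction can never be reached along the route you set up: producing the $Y_\alpha$ is exactly the nontrivial content of the statement, while the covering, saturation and finite-subcover part is the easy half. (A smaller issue: (D4) is stated only for definable sets, so $\dim(g_\alpha H)=\dim H$ for the type-definable coset is itself unjustified, though this is repairable by translating definable supersets instead.)

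Neither fallback repairs this. Route (a) is a gesture, not an argument: it is unclear what ``refining $\{H_i\}$ into approximate subgroups whose dimensions converge to $\dim H$'' would mean or produce. Route (b) reduces the Fact to its instance $H=G^{00}$, which gains nothing: $\dim G^{00}=\dim G$ for an arbitrary (not necessarily definably compact) definable group is not a quotable classical fact but precisely the statement at issue; and the proposed transfer to $\cal M$ via the embedding of $G$ into $K$ is not spelled out --- the natural bridge between $G^{00}$ and $K^{00}$ is Claim \ref{G00}, whose proof uses Fact \ref{bddindex} repeatedly, so invoking it here would be circular. A correct compactness argument runs in the opposite direction. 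Write $H=\bigcap_i H_i$ with the family bounded and closed under finite intersections. First, each $H_i$ has dimension $k$: otherwise $G=\bigcup_{\alpha<\lambda}g_\alpha H\subseteq\bigcup_\alpha g_\alpha H_i$, while the bounded partial type $\{x\in G\}\cup\{x\notin g_\alpha H_i:\alpha<\lambda\}$ is finitely satisfiable by (D2) and (D4), hence realized by saturation, a contradiction. Then use compactness on the group structure itself: arrange the family so that the $H_i$ are symmetric and for each $i$ there is $j$ with $H_jH_j\subseteq H_i$; since each $H_j$ has dimension $k$, it has nonempty interior in the group topology (\cite{pi-gps}, \cite{wen-gps}), so each $H_i$ contains a definable open neighbourhood of the identity, and one further saturation argument (on the corners of a box in a chart) yields a definable set of dimension $k$ contained in all the $H_i$, hence in $H$. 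Monotonicity of $\dim$ then gives $\dim H=k$.
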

\begin{proof}
Easy, by compactness. 
\end{proof}

\subsection{The proof of Pillay's Conjecture}\label{sec-pi} From now on, we assume that \cal M is sufficiently saturated. By Fact \ref{fact-M*}, $\cal N$ is also sufficiently saturated. We also fix a definable group $G$ and the $\cal N$-definable group $K$ from Theorem \ref{main}. We write $ab$ for the multiplication in $K$ and $G$. We let $k=\dim G=\dim K$. 
By `type-definable' we mean `type-definable in $\CM$', and by `$\CN$-type-definable' we mean `type-definable in $\CN$'. 
As mentioned in the introduction, since $\cal M$ has NIP,  $G^{00}$ exists. Here, however, we provide a more precise description of $G^{00}$ which further enables us to prove Claim \ref{G/G00} below.

\begin{claim}\label{G00}
$G$ has a smallest type-definable subgroup $G^{00}$ of bounded index. More precisely,  $G^{00}=G\cap K^{00}$.
\end{claim}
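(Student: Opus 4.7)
The plan is to establish $G^{00}=G\cap K^{00}$ by showing both inclusions; the existence of $G^{00}$ is already granted by NIP (Shelah), as remarked above.

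For the easy inclusion $G^{00}\subseteq G\cap K^{00}$, I verify that $G\cap K^{00}$ is itself a type-definable subgroup of $G$ of bounded index; the minimality of $G^{00}$ then yields the inclusion. Writing $K^{00}=\bigcap_j L_j$ with each $L_j$ $\mathcal{N}$-definable, the trace property recalled in Section~\ref{sec-wom} (every $\mathcal{N}$-definable subset of $M^n$ is $\mathcal{M}$-definable) makes each $G\cap L_j=G\cap(L_j\cap M^n)$ $\mathcal{M}$-definable, so $G\cap K^{00}=\bigcap_j (G\cap L_j)$ is type-definable in $\mathcal{M}$. It is clearly a subgroup, and the natural injection $G/(G\cap K^{00})\hookrightarrow K/K^{00}$ together with the boundedness of $[K:K^{00}]$ ensures the bounded-index condition.

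For the harder inclusion $G\cap K^{00}\subseteq G^{00}$, it suffices to prove that every type-definable subgroup $H\leq G$ of bounded index contains $G\cap K^{00}$. Fixing such an $H$, the strategy is to \emph{extend} $H$ to an $\mathcal{N}$-type-definable subgroup $K'\leq K$ of bounded index satisfying $K'\cap G\subseteq H$; then $K^{00}\subseteq K'$ by the minimality of $K^{00}$, whence $G\cap K^{00}\subseteq K'\cap G\subseteq H$. To construct $K'$, I would write $H=\bigcap_i H_i$, where the $H_i$ form a decreasing family of symmetric $\mathcal{M}$-definable neighborhoods of $1_G$ in the group topology of Remark~\ref{rmk-gtop} with $H_{i+1}\cdot H_{i+1}\subseteq H_i$, and set $K':=\bigcap_i \cl_K(H_i)$. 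Each $\cl_K(H_i)$ is $\mathcal{N}$-definable, since the $K$-topology is $\mathcal{N}$-definable and in $\mathcal{N}$ closures of definable sets are definable; hence $K'$ is $\mathcal{N}$-type-definable. Continuity of multiplication in $K$ and the nesting condition give that $K'$ is a subgroup: for $a,b\in K'$ and any $i$, choose $j$ with $H_j H_j\subseteq H_i$ to obtain $a,b\in\cl_K(H_j)$ and thus $ab\in\cl_K(H_jH_j)\subseteq\cl_K(H_i)$, and inversion is symmetric.

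Two substantive checks then remain. First, $K'\cap G\subseteq H$: using that, by Remarks~\ref{rmk-gtop} and~\ref{rem-sym}, the $K$-topology restricted to $G$ agrees (up to a lower-dimensional set) with the group topology on $G$, I would arrange the $H_i$'s to be closed in $G$'s own group topology, so $\cl_K(H_i)\cap G=H_i$ and hence $K'\cap G=\bigcap_i H_i=H$. Second, $[K:K']$ is bounded: since $H\subseteq K'$ and $\dim H=\dim G=\dim K$ by Fact~\ref{bddindex}, one has $\dim K'=\dim K$, after which an o-minimal argument (in $\mathcal{N}$) that a full-dimensional type-definable subgroup of $K$ has bounded index closes the gap. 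The main obstacle I anticipate is precisely this bounded-index verification for $K'$: passing from full dimension to bounded index for a type-definable subgroup is not automatic and will likely require either an auxiliary o-minimal lemma or a subtler construction of the extension (for instance, replacing $\cl_K(H_i)$ by translate-thickenings $H_i\cdot V_i$ for an appropriate shrinking family of $\mathcal{N}$-definable neighborhoods $V_i$ of $1$).
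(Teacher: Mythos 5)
Your overall strategy matches the paper's. For the hard inclusion the paper likewise fixes an arbitrary type-definable $H\le G$ of bounded index and passes to $L=\cl_K(H)$; it shows $L$ is a type-definable subgroup of $K$ of bounded index, concludes $K^{00}\subseteq L$, and finishes by proving $G\cap L=H$. Your $K'=\bigcap_i\cl_K(H_i)$ (with nested $H_i$) is the same set as $L$ once one checks, by compactness on a family closed under finite intersections, that $\cl_K\bigl(\bigcap_i H_i\bigr)=\bigcap_i\cl_K(H_i)$ --- which is precisely how the paper proves type-definability of $L$, so you would need that identity anyway. One presentational difference: to get $L\cap G=H$ the paper does not arrange the $H_i$ to be closed in $G$; instead it argues by contradiction that any coset $aH\subseteq(G\cap L)\setminus H$ would be full-dimensional, hence open in $G$, hence of the form $W\cap G$ for $W$ open in $K$, and would then have to meet $H$, which is dense in $L$. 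Your route via closed $H_i$ works but buys nothing over the paper's.

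The obstacle you flag is a genuine one, and in fact the paper's written justification for it is not adequate as stated: the line ``By Fact~\ref{bddindex} again, $L$ has bounded index in $K$'' invokes the converse of that fact, but Fact~\ref{bddindex} only gives bounded index $\Rightarrow$ full dimension, and the converse is false in general for type-definable subgroups (the monad of $0$ is a full-dimensional type-definable subgroup of $(N,+)$ of unbounded index). The argument that actually works here uses density of $G$ in $K$ rather than a bare dimension count. Since $H$ has bounded index it has full dimension and is therefore an open subgroup of $G$, so $H=V\cap G$ for some $K$-open $V$; density of $G$ in $K$ gives $L=\cl_K(H)\supseteq V$, so $L$ is a subgroup of $K$ with non-empty interior and is therefore open in $K$; then $GL$ is clopen and contains the dense set $G$, so $GL=K$, and $[K:L]=[GL:L]=[G:G\cap L]\le[G:H]$, which is bounded. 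You need some version of this density step; the hoped-for ``auxiliary o-minimal lemma'' that full dimension alone forces bounded index for type-definable subgroups does not exist.
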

\begin{proof} We show that $G\cap K^{00}$ is the smallest type-definable subgroup of $G$ of bounded index. It is certainly a type-definable subgroup of bounded index, since $[G:G\cap K^{00}]\le [K:K^{00}]$. Let $H$ be another such subgroup. We prove that $H$ must contain $G\cap K^{00}$. Let $L=\cl_K(H)$.
We claim that $L$ is a type-definable subgroup of $K$ of bounded index, and hence it must contain $K^{00}$. It is certainly a subgroup, since it is the closure of a subgroup of $K$.

To see that $L$ is type-definable, let $H=\bigcap_i H_i$, with $H_i$ definable. We may assume that the family $\{H_i\}$ is closed under finite intersection. Then $\cl_K(H)=\bigcap_i\cl_K(X_i)$. Indeed, for $\sub$, the right-hand side is a closed set containing $H$ and hence also $\cl_K(H)$. For $\supseteq$, let $a\in\bigcap_i\cl_K(X_i)$. Then for every $i$, there is an open box $B$ containing $a$, with $B\cap X_i=\es$. By compactness, there is an open box $B$ containing $a$, with $B\cap \bigcap_i X_i=\es$. That is, $a\not\in \cl_K(H)$.

To see that $L$ has bounded index in $K$, note that since $H$ has bounded index in $G$, by Fact \ref{bddindex} we have $\dim H= k$. Hence $\dim \cl(H)=k$. By Remark \ref{rem-sym},   $\dim \cl_K(H)=k= \dim K$. 
By Fact \ref{bddindex} again, $L$ has bounded index in $K$.


We have shown that $K^{00}\sub L$. Hence, to prove that $G\cap K^{00}\sub H$, it suffices to show $H=G\cap L$. Clearly, $H$ is a subgroup of $G\cap L$. Assume, towards a contradiction, that $H$ is properly contained in $G\cap L$. Then there must be a coset $aH$  of $H$ in $G\cap L$, such that $aH\sub (G\cap L)\sm H$. Since $\dim aH=\dim H=\dim L$, and $\cl_K(H)=L$, it follows 
 that $aH\cap H\ne \es$, a contradiction.
\end{proof}



By Fact \ref{bddindex},  $G^{00}$ has non-empty interior in the $G$-topology, and hence it follows that it is open in the $G$-topology. 

\begin{claim}\label{G/G00} The function $f:G/G^{00}\to K/K^{00}$ given by $$x G^{00}\mapsto x K^{00}.$$ is an isomorphism of topological groups with respect to the logic topology on both groups.
In particular, $G/G^{00}$ is a compact Lie group. 
\end{claim}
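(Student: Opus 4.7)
The plan is to exhibit $f$ as a continuous bijective group homomorphism between compact Hausdorff topological groups in the logic topology, from which the automatic upgrade to a topological group isomorphism is standard. First, well-definedness, injectivity, and the homomorphism property all reduce to the identity $G^{00} = G \cap K^{00}$ from Claim \ref{G00}: for any $x, y \in G$, $x^{-1}y \in G^{00}$ if and only if $x^{-1}y \in K^{00}$, giving well-definedness and injectivity; multiplicativity is immediate from the construction of $f$.

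For surjectivity, I would first argue that $K^{00}$ is open in the $K$-topology by the same reasoning given for $G^{00}$ following Claim \ref{G00}: being a type-definable subgroup of $K$ of bounded index, $K^{00}$ has full dimension $\dim K$ by Fact \ref{bddindex}, hence has non-empty interior in the $K$-topology, and any subgroup with non-empty interior is open. Then $GK^{00}$ is a union of open $K^{00}$-cosets, so is open; its complement in $K$ is likewise a union of cosets of $K^{00}$, so is also open. Thus $GK^{00}$ is clopen in $K$ and contains $G$, so it contains $\cl_K(G) = K$ by Theorem \ref{main}. Hence $GK^{00} = K$, which is precisely the surjectivity of $f$.

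For continuity, let $A \subseteq K/K^{00}$ be closed in the logic topology, so that $\pi_K^{-1}(A) = \bigcap_i X_i$ with each $X_i$ $\cal N$-definable. Then
\[
\pi_G^{-1}(f^{-1}(A)) = \pi_K^{-1}(A) \cap G = \bigcap_i (X_i \cap G).
\]
Each $X_i \cap G$ is $\cal M$-definable: the trace $X_i \cap M^n$ is $\cal M$-definable by construction of $\cal M^* = \cal N$ (Fact \ref{fact-M*}), and intersecting with the $\cal M$-definable set $G$ preserves $\cal M$-definability. So $f^{-1}(A)$ has $\cal M$-type-definable preimage in $G$ and is therefore closed in the logic topology on $G/G^{00}$, proving continuity.

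To wrap up, both $G/G^{00}$ and $K/K^{00}$ are compact Hausdorff topological groups in the logic topology (a standard fact for type-definable normal subgroups of bounded index in a sufficiently saturated structure), so the continuous bijective group homomorphism $f$ is automatically a homeomorphism and hence an isomorphism of topological groups. The ``in particular'' clause then follows because $K/K^{00}$ is a compact real Lie group by Pillay's conjecture for the $\cal N$-definable group $K$ in the sufficiently saturated o-minimal structure $\cal N$. I expect the main obstacles to be establishing that $K^{00}$ is open (needed for surjectivity via density), and the careful bookkeeping transferring type-definability between $\cal N$ and $\cal M$ via traces in the continuity step; both rely on the tight interaction between $\cal M$ and $\cal N$ developed in the preceding sections.
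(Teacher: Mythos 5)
Your proof is correct and follows essentially the same route as the paper: surjectivity via the openness of $K^{00}$ together with $\cl_K(G)=K$, the group isomorphism via $G^{00}=G\cap K^{00}$ (the paper just packages this as the second isomorphism theorem), continuity via the identity $\pi_K^{-1}(A)\cap G=\pi_G^{-1}(f^{-1}(A))$ together with the fact that traces of $\cal N$-(type-)definable sets on $M^n$ are $\cal M$-(type-)definable, and then the automatic homeomorphism upgrade from compact Hausdorff. The only cosmetic difference is that the paper phrases surjectivity pointwise (pick $y\in xK^{00}\cap G$) rather than via the clopen-subgroup observation, but the underlying ingredients are identical.
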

\begin{proof}
By Claim \ref{G00}, $G^{00}=G\cap K^{00}$. We first prove that $G K^{00}= K$, which implies that $f$ is a group isomorphism, by the second isomorphism theorem. Let $x\in K$.
Since $\cl_K(G)=K$ and $\dim x K^{00}=\dim K^{00}=k$ (by Fact \ref{bddindex}), there must be $y\in x K^{00}\cap G$. Thus $x\in y K^{00}\sub GK^{00}$, as needed.

It remains to show that $f$ is a homeomorphism, with respect to the logic topology on both $G/G^{00}$ and $K/K^{00}$. By \cite[Lemma 2.5]{pi-conj}, both quotients are compact Hausdorff spaces, and hence, since $f$ is a bijection, it suffices to show that it is continuous.
Write $\pi: G\to G/G^{00}$ and $\sigma:K\to K/K^{00}$ for the canonical group homomorphisms. We need to show that for every $X\sub G/G^{00}$, 
$$\text{$\sigma^{-1}(f(X))$ is type-definable \,\,$\Rarr$\,\, $\pi^{-1}(X)$ is type-definable}.$$
Let
$$X=\{x G^{00} : x\in S\},$$
for some $S\sub G$. Then $\pi^{-1}(X)=S G^{00}$, $f(X)=\{x K^{00} :x\in S\}$, and
$\sigma^{-1}(f(X))= S K^{00}$. Hence we need to prove that
\begin{equation}\text{$S K^{00}$ is type-definable \,\,$\Rarr$\,\, $S G^{00}$ is type-definable.}\label{SKG}
\end{equation}
We have
$$S G^{00}=S (K^{00}\cap G)=(S K^{00}) \cap G,$$
since $S\sub G$. This implies (\ref{SKG}).
\end{proof}


We now turn to the property $fsg$.

\begin{lemma}\label{fsg}
Assume $G$ has $fsg$. Then so does $K$.
\end{lemma}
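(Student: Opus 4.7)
The plan is to promote the witnessing measure for $fsg(G)$ to one for $fsg(K)$ by pullback to $G$. Let $\mu$ be a left-$G$-invariant $fim$ Keisler measure on $G$, and define $\nu$ on $\cal N$-definable subsets $Y \sub K$ by $\nu(Y) = \mu(Y \cap G)$. This is well-defined since $Y \cap G = (Y \cap M^n) \cap G$ is $\cal M$-definable (Fact~\ref{fact-M*}(1) ensures traces of $\cal N$-definable sets on $M^n$ are $\cal M$-definable). Finite additivity and $\nu(K) = \mu(G) = 1$ are immediate, so $\nu$ is a Keisler measure on $K$.

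The crux is showing that $\nu$ is left-$K$-invariant. For $a \in G$ it is immediate: $aY \cap G = a(Y \cap G)$ and $\mu$ is left-$G$-invariant. For $a \in K \sm G$, I would combine density of $G$ in $K$ with $fim$-approximation. By the standard fact that in the $fsg$ setting $\mu$ vanishes on $\cal M$-definable sets of dimension $< \dim G$, together with Remark~\ref{rem-sym}, one may replace $Y$ by $\mathrm{int}_K(Y)$, i.e., assume $Y$ is $K$-open. Fix $\varepsilon > 0$ and take $fim$-witnesses $b_1, \dots, b_k \in G$ at error $\varepsilon$ for the trace family $\{a' Y \cap G : a' \in K\}$. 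Then
$$f(a) := \tfrac{1}{k}\,|\{i : b_i \in aY\}|$$
is lower-semicontinuous on $K$ (since $\{a : b_i \in aY\}$ is the preimage of $Y$ under the continuous map $a \mapsto a^{-1} b_i$, hence $K$-open), and lies within $\varepsilon$ of $\nu(Y)$ on the dense subset $G$. Lower-semicontinuity plus density yield $\nu(aY) \leq \nu(Y) + 2\varepsilon$. Running the same argument on the $K$-open set $K \sm \cl_K(Y)$ (noting $\dim(\cl_K(Y) \sm Y) < \dim K$, so $\nu(\cl_K(Y)) = \nu(Y)$ and similarly for the $a$-translate) produces $\nu(aY) \geq \nu(Y) - 2\varepsilon$. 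Letting $\varepsilon \to 0$ concludes left-$K$-invariance.

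The $fim$ property of $\nu$ is transferred from that of $\mu$ via the trace correspondence: each $\cal N$-formula $\varphi(x;y)$ yields a uniformly $\cal M$-definable family of $G$-traces $\{\varphi(K;c) \cap G\}_{c}$, so $fim$-witnesses for $\mu$ on the corresponding $\cal M$-formula directly witness $fim$ for $\nu$ on $\varphi$. The main obstacle in both the invariance step and the $fim$-transfer step is precisely this uniformity: that an $\cal N$-definable family indexed by parameters in $N$ has uniformly $\cal M$-definable $G$-traces. This tameness of the canonical extension $\cal N = \cal M^*$ is to be extracted from Wencel's construction \cite{wen-nonv} and the analysis of external-to-internal definability in \cite{bhp}.
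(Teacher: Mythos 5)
Your strategy coincides with the paper's: pull the measure back by setting $\nu(Y)=\mu(Y\cap G)$, use non-genericity (via the $fsg$ fact from \cite{hps2}) to discard boundaries and reduce to $K$-open sets, prove left-invariance by combining $fim$-approximation with openness, continuity of the group operation and density of $G$ in $K$, and transfer $fim$ through traces. Your invariance mechanics (lower semicontinuity of the empirical average $f$) is only cosmetically different from the paper's, which instead transports the witness points lying in $S=X\cap G$ into $gS_k=gkX\cap G$ for $g\in G$ sufficiently close to $k^{-1}$; both variants rest on exactly the same ingredients.

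The genuine gap is the step you defer: that the $G$-traces of an $\cal N$-definable family indexed by parameters ranging over $N$ (your family $\{a'Y\cap G:a'\in K\}$, and the families behind your $fim$-transfer) are \emph{uniformly} $\cal M$-definable. This is not quotable as such from \cite{wen-nonv} or \cite{bhp}; what those give (Fact \ref{fact-M*}) is that each \emph{individual} trace $\varphi(M,b)$, $b\in N^m$, is $\cal M$-definable. The paper must and does argue the uniformity itself, by a compactness argument in the pair $\la \cal N,\cal M\ra$: the choice of an $\cal M$-formula defining $\varphi(M,b)$ depends only on the pair-type of $b$, so by compactness finitely many formulas suffice and can be merged into a single $\theta(x,z)$ with $z$ ranging over $M^l$; only then does the definition of $fim$ (which is stated per formula, with parameters from the home sort $M$) license your witnesses $b_1,\dots,b_k$. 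Without this argument your invariance step and your $fim$-transfer step are both incomplete. Note also that for invariance the external uniformity is avoidable, as in the paper: since $gY\cap G=g(Y\cap G)$ for $g\in G$, the $G$-indexed family $\{g(Y\cap G)\}_{g\in G}$ is already uniformly $\cal M$-definable, and $aY\cap G$ (respectively $kX\cap G$) is a single $\cal M$-definable set, so applying $fim$ to these internally parametrized families, together with your density/semicontinuity argument, suffices; the compactness argument is then needed only for the $fim$-transfer. A minor point: the $fim$ definition yields witnesses in $M^n$, not necessarily in $G$, so define $f$ via membership in $aY\cap G$ (this preserves openness of the relevant preimages) rather than assuming $b_i\in G$.
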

\begin{proof}
Since $G$ has $fsg$, there is a left-invariant $fim$ Keisler measure $\mu$ on $G$.  We define $\nu : Def(K)\to [0,1]$ by setting $\nu(X)=\mu(X\cap G)$, and show that $\nu$ is a left-invariant $fim$ Keisler measure. It is straightforward to see that it is a Keisler measure, since $\mu$ is.	
In what follows, all topological notions for subsets of $K$ 
are taken with respect to its group topology.\\

	\noindent\textbf{Claim 1.} {\em If $X\subseteq K$ is definable then  $\nu(X)=\nu(\intr(X))$, where $\intr(X)$ is the interior of $X$.}
	
	\begin{proof}[Proof of Claim 1]
	  It suffices to show that if $X$ has empty interior, then $\nu(X)=0$. For that we will show that if $S\subseteq G$ is definable with empty interior, then $\mu(S)=0$. The assumption that $S$ has empty interior implies that  $\dim(S)<\dim(G)$, so $S$ is non-generic. Since $G$ has $fsg$, $\mu(S)=0$ by \cite[Fact 3.1(iv)]{hps2}.
	\end{proof}
	
	\noindent\textbf{Claim 2.} {\em $\nu$ is left-invariant.}
\begin{proof}[Proof of Claim 2]
Let $X\subseteq K$ be a definable set and $k\in K$. We need to show that $\nu(kX)=\nu(X)$. By  Claim 1, we may assume that $X$ is open. Let $S=X\cap G$ and $S_k=kX\cap G$. 
Assume towards a contradiction that $\mu(S)>\mu(S_k)$ (the other inequality is treated similarly). 	Fix $\ve << \mu(S)-\mu(S_k)$.
By applying $fim$ of $\mu$ to the union of the families $\{gS : g\in G\}$ and $\{g S_k: g\in G\}$, we obtain $a_1,\dots, a_r\in M^n$ such that
$$|\mu(gS)-\mathrm{Av}( a_1, \dots, a_k; gS)|< \ve$$
and
$$|\mu(gS_k)-\mathrm{Av}( a_1, \dots, a_k;gS_k)|< \ve.$$
Assume, for simplicity of notation that $a_i\in S$ if and only if $i<j$ (some $j$). Note that $gS_k=gkX\cap G$ for $g\in G$. Continuity of the group operation, and the assumption that $X$ is open assure that $a_1,\dots, a_j\in hX$ for all $h\in K$ close enough to $e$. So for $g\in G$ close enough to $k^{-1}$, we get that $a_1,\dots a_j\in gkX\cap G=gS_k$. The choice of $a_1,\dots, a_r$ implies that
$\mu(gS)<\frac{j}{r}+\ve$ and $\mu(gS_k)>\frac{j}{r}-\ve$. Hence
$$\mu(gS)-\mu(gS_k)<2\ve.$$
Since $\mu$ is left-invariant, we obtain $\mu(S)-\mu(S_k)<2\ve$, contradicting the choice of $\ve$.
\end{proof}

\noindent\textbf{Claim 3.} {\em $\nu$ is $fim$.}

\begin{proof}[Proof of Claim 3]
Let $\varphi(x;y)$ be a formula. Then for every $b\in N^m$ the set $\varphi(M,b)$ is definable, say by $\psi(x,c)$ for some $c\in M^m$. Of course, if $b$ and $b'$ have the same type in the pair $\la \CN,\CM\ra$, then there exists $c'\in M^m$ such that $\varphi(M,b')=\psi(M,c')$. So by compactness there is a formula $\theta(x,z)$, such that for all $b\in N^m$, there is $c\in M^l$ with $\varphi(M,b)=\theta(M,c)$.  Now, since $\mu$ is $fim$, for every $\ve>0$, there are $a_1,\dots, a_k\in M^m$, such that for every $c\in M^l$, and for $X=\varphi(M^n, c)$, we have
$$|\mu(X)-\mathrm{Av}( a_1, \dots, a_k;X)|< \ve.$$
But since for every $b\in N^m$, there is $c\in M^l$ with $$\nu(\varphi(N,b))=\mu(\varphi(M,b)=\mu(\theta(M,c)),$$ the result follows.
\end{proof}
This ends the proof of the lemma.
\end{proof}

We can now conclude the proof of Theorem \ref{app}.

\begin{claim}\label{cdom} Suppose $G$ has $fsg$. Then $\dim G=\dim_{Lie} G/G^{00}$.
   \end{claim}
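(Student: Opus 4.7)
The plan is to transfer Pillay's Conjecture from the $\cal N$-definable group $K$ down to $G$ using the machinery already established. The idea is that everything on the $K$ side is covered by the known o-minimal Pillay's Conjecture, and the isomorphism $G/G^{00} \cong K/K^{00}$ from Claim \ref{G/G00} together with the dimension equality $\dim G = \dim K$ from Theorem \ref{main} will bridge the gap.

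First, since $G$ has $fsg$, Lemma \ref{fsg} gives that $K$ has $fsg$ as an $\cal N$-definable group. Because $\cal N$ is sufficiently saturated (by Fact \ref{fact-M*}(2)) and o-minimal, the $fsg$ version of Pillay's Conjecture for o-minimal structures (\cite{hpp}, combined with the results of \cite{emprt, elst, pet-sbd}) applies to $K$, yielding
\[
\dim_{Lie} K/K^{00} = \dim K.
\]

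Next, by Claim \ref{G/G00}, the map $f: G/G^{00} \to K/K^{00}$ given by $xG^{00}\mapsto xK^{00}$ is an isomorphism of compact topological groups. Both quotients carry (by the $\cal N$-definable case and by Claim \ref{G/G00} itself) the structure of compact real Lie groups, and a continuous group isomorphism between compact Lie groups is automatically a Lie group isomorphism (by the standard closed-subgroup/automatic-smoothness theorem). Hence
\[
\dim_{Lie} G/G^{00} = \dim_{Lie} K/K^{00}.
\]

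Combining with $\dim G = \dim K$ from Theorem \ref{main}, we obtain the desired chain
\[
\dim_{Lie} G/G^{00} = \dim_{Lie} K/K^{00} = \dim K = \dim G.
\]
The only step with any subtlety is justifying that the topological isomorphism $f$ upgrades to a Lie isomorphism (so that Lie dimensions agree), but this is an immediate consequence of automatic smoothness of continuous homomorphisms between Lie groups; everything else is a direct assembly of results already in place.
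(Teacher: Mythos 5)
Your proposal is correct and follows essentially the same route as the paper: transfer $fsg$ from $G$ to $K$ via Lemma \ref{fsg}, invoke the o-minimal Pillay's Conjecture for $K$, and conclude via the isomorphism of Claim \ref{G/G00} together with $\dim G=\dim K$ from Theorem \ref{main}. The extra remark on upgrading the topological isomorphism to a Lie isomorphism is a harmless elaboration of what the paper leaves implicit.
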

\begin{proof} 
By Lemma \ref{fsg}, $K$ has $fsg$.   By Pillay's Conjecture for o-minimal structures, $\dim_{Lie} K/K^{00}=\dim K=k$. By Claim \ref{G/G00}, we are done.
\end{proof}

We conclude with some  remarks.

\begin{remark}\label{rmk-cdom}
It is worth mentioning that in \cite{hpp},  following the proof of Pillay's Conjecture, the Compact Domination Conjecture was introduced, and  was proved by splitting different cases in \cite{el-cdom, elpet2, hpp2, hp}. 
Compact Domination for a NIP group $G$ turned out to be a crucial property, as it corresponds to the existence of an invariant smooth Keisler measure on $G$ (\cite[Theorem 8.37]{simon-book}). By now it is known that every $fsg$ group definable in a `distal' NIP structure (which includes weakly o-minimal structures) is compactly dominated (\cite[Chapters 8 \& 9]{simon-book}). The present account can actually yield a short proof of compact domination for fsg groups definable in \cal M, which we however omit.
\end{remark}


\begin{remark}\label{rmk-small}
The results of this paper also apply  to another category of definable groups, namely \emph{small groups} in certain dense pairs $\la \cal N, P\ra$. Those pairs include expansions of a real closed field by a dense elementary substructure $P$ or a dense multiplicative divisible subgroup with the Mann property. Following \cite{vdd-dense}, a definable set $X\sub N^n$ is called \emph{small} if there is an $\cal N$-definable map $f:N^{mk}\to N^n$ such that $X\sub f(P^k)$. By \cite{el-Pind} every small set is in definable bijection with a set definable in the induced structure on $P$, which is known to be weakly o-minimal and non-valuational (\cite{vdd-dense, dg}).
Therefore, we have established Pillay's Conjecture for small groups in dense pairs.
\end{remark}


\begin{thebibliography}{999999}

\bibitem{bhp} E. Bar-Yehuda, A. Hasson, Y. Peterzil, {\em A theory of pairs for non-valuational structures}, J. Symb. Logic 84 (2019), 664--683.

\bibitem{bm} E. Baro, A. Martin-Pizarro, {\em Open core and small groups in dense pairs of topological structures}, Preprint, 2019, arXiv:1801.08744v2.

\bibitem{cd}  G. Cherlin, M. A. Dickmann, {\em Real closed rings II. model theory},
  Annals of Pure and Applied Logic 25 (1983), 213--231.


\bibitem{cs} A. Chernikov, P. Simon, \emph{Definably amenable NIP groups}, J. AMS 31 (2018), 609--641.

\bibitem{vdd-weil} L. van den Dries, {\em Weil's group chunk theorem: A topological setting}, Illinois J. Math.
 34 (1990), 127--139.


\bibitem{vdd-dense} L. van den Dries, {\em Dense pairs of o-minimal structures}, Fundamenta Mathematicae 157 (1998), 61-78.

\bibitem{vdd-book} L. van den Dries, {\sc Tame topology and o-minimal structures}, Cambridge University Press, Cambridge, 1998.

\bibitem{vdDriesLimits} L. van den Dries, {\em  Limit sets in o-minimal structures}, Proceedings of the RAAG summer school, O-minimal structures, Lisbon,	2003.


\bibitem{dg} L. van den Dries, A. G\"unayd\i n, {\em The fields of real and complex numbers with a small multiplicative group}, Proc. London Math. Soc. 93 (2006), 43-81.

\bibitem{emprt} M. Edmundo, M. Mamino, L. Prelli, J. Ramakrishnan, G. Terzo, {\em On Pillay's conjecture in the general case}, Adv. Math. 310 (2017), 940 -- 992.

\bibitem{el-cdom} P. Eleftheriou, {\em Compact domination for groups definable in linear o-minimal structures}, Arch. Math. Logic  48 (2009), 607--623.


\bibitem{el-Pind} P. Eleftheriou, {\em Small sets in dense pairs}, Israel Journal of Mathematics, Online first (2019).

\bibitem{el-pw} P. Eleftheriou, {\em Counting algebraic points in expansions of o-minimal structures by a dense set}, Preprint, 2018, arXiv:1708.03936v3.

\bibitem{ehk} P. Eleftheriou, A. Hasson and G. Keren, {\em
On definable Skolem functions in weakly o-minimal non-valuational structures}, J. Symb. Logic 82 (2017), 1482-1495.


\bibitem{elpet2} P. Eleftheriou and Y. Peterzil, {\em Definable groups as homomorphic images of semilinear and field-definable groups}, Selecta Math. N.S. 18 (2012), 905--940.


\bibitem{elst} P. Eleftheriou and S. Starchenko, {\em Groups
definable in ordered vector spaces over ordered division rings}, J. Symb. Logic 72 (2007), 1108--1140.



\bibitem{hr} E. Hrushovski, {\em The Mordell-Lang conjecture for function fields}. J. Amer. Math. Soc. 9 (1996), 667--690.

\bibitem{hpp} E. Hrushovski, Y. Peterzil, and A. Pillay, {\em Groups, measures, and the NIP}, J. Amer. Math. Soc. 21  (2008), 563--596.

\bibitem{hpp2} E. Hrushovski, Y. Peterzil, and A. Pillay, {\em On central extensions and definably compact groups in o-minimal structures}, J. Algebra, 327 (2011), 71--106.

\bibitem{hp}  E. Hrushovski and A. Pillay, {\em On NIP and invariant measures},
J. Eur. Math. Soc. 13 (2011), 1005--1061.


Volume 53, Number 4, 2012

\bibitem{hps1} E. Hrushovski, A. Pillay,  P. Simon, {\em A Note on generically stable measures and fsg groups},
Notre Dame J. Formal Logic 53 (2012), 599--605.

\bibitem{hps2} E. Hrushovski, A. Pillay,  P. Simon, {\em Generically stable and smooth measures in NIP theories},
Trans. Amer. Math. Soc. 365 (2013), 2341--2366.



\bibitem{MacMaSt} D. Macpherson, D. Marker, C. Steinhorn,
\newblock {\em Weakly o-minimal structures and real closed fields}
\newblock {Trans. Amer. Math. Soc.} 352 (2000), 5435--5483 (electronic).




\bibitem{op} A. Onshuus and A. Pillay, {\em Definable groups and compact $p$-adic Lie groups}, J. London Math. Soc. (2) 78 (2008), 233-247.

\bibitem{pet-sbd} Y. Peterzil, {\em Returning to semi-bounded sets}, J. Symb. Logic  74  (2009), 597--617.

\bibitem{petst} Y. Peterzil and C. Steinhorn, {\em Definable compactness and
definable subgroups of o-minimal groups}, J. London Math. Soc. (2) 69 (1999), 769-786.

\bibitem{pi-gps} A. Pillay, {\em On groups and
fields definable in o-minimal structures}, J. Pure Appl. Algebra 53 (1988), 239-255.

\bibitem{pi-conj} A. Pillay, {\em Type
definability, compact Lie groups, and o-minimality}, J. of Math. Logic, 4 (2004), 147-162.

\bibitem{shelah} S. Shelah, {\em Minimal bounded index subgroup for dependent theories},
Proceedings of the American Mathematical Society 136 (2008), 1087--1091.

\bibitem{simon-book} P. Simon, {\sc A guide to NIP theories}, Lecture Notes in Logic, Cambridge University Press, 1st edition, 2015.

\bibitem{star} S. Starchenko, {\em NIP, Keisler measures and combinatorics}, S\'eminaire BOURBAKI, 68\`eme
ann\'ee, 2015-2016, no 1114, http://www.bourbaki.ens.fr/TEXTES/1114.pdf (2016).

\bibitem{wen-nonv} R. Wencel, {\em Weakly o-minimal non-valuational structures}, Ann. Pure Appl. Log. 154 (2008), 139--162.

\bibitem{wen-gps} R. Wencel, {\em Groups, group actions and fields definable in first-order topological structures}, Math. Log. Quart. 58, No.6 (2012), 449--467.

\bibitem{wen-scd} R. Wencel, {\em On the strong cell decomposition property for weakly o-minimal structures}, Math. Log. Quart. 59 (2013), 452--470.


\bibitem{WilkiePfaff} A. J. Wilkie, {\em Model completeness results for expansions of the ordered field
of real numbers by restricted {P}faffian functions and the exponential function},
\newblock{J. Amer. Math. Soc.} 9 (1996), 1051--1094.
\end{thebibliography}
\end{document}